\numberwithin{equation}{section}
\theoremstyle{plain}
\newtheorem{theorem}{Theorem}[section]
\newtheorem{lemma}[theorem]{Lemma}
\newtheorem{corollary}[theorem]{Corollary}
\newtheorem{proposition}[theorem]{Proposition}
\newtheorem{obs}[theorem]{Observation}
\theoremstyle{definition}
\newtheorem{definition}{Definition}[section]
\theoremstyle{definition}\newtheorem{example}[theorem]{Example}}
\newcommand{\supp}{ \mbox{supp\;}}
\def\T{ \mathbb T}
\def\H{H^\infty}
\def\D{{ \mathbb D}}
\def\C{{ \mathbb C}}
\def\N{{ \mathbb N}}
\def\Re{{ \mathscr R}}
\def\e{\varepsilon}
\def\union{\cup}
\def\Union{\bigcup}
\def\inter{\cap}
\def\Inter{\bigcap }
\def\ov{\overline}
\def\ss{\subseteq}
\def\emp{\emptyset}
\def\buildrel#1_#2^#3{\mathrel{\mathop{\kern 0pt#1}\limits_{#2}^{#3}}}
\def\BP{Blaschke product}
\def\IBP{interpolating Blaschke product}
\def\ssi{\Longleftrightarrow}
\date{}
\begin{document}

\title [$t$-analyticity]{Partial regularity and $t$-analytic sets for Banach function algebras}


\author{Joel Feinstein}
\address{\small School of Mathematical Sciences\\
\small University of Nottingham\\
\small University Park\\
\small Nottingham~NG7~2RD, UK}

\email{Joel.Feinstein@nottingham.ac.uk}

 \author{Raymond Mortini}
\address{\small D\'{e}partement de Math\'{e}matiques\\
\small LMAM,  UMR 7122,
\small Universit\'{e} Paul Verlaine\\
\small Ile du Saulcy\\
\small F-57045 Metz, France}

\email{mortini@univ-metz.fr}

\subjclass{Primary 46J15 ; Secondary 30H50}
\keywords{Banach function algebras; regularity points; $t$-analytic sets; closed prime ideals;
bounded analytic functions; Douglas algebras; Alling's conjecture}

\begin{abstract}
In this note we introduce  the  notion of $t$-analytic sets. Using this concept, we  construct a class
of closed prime ideals in Banach function algebras and  discuss
some problems related to Alling's conjecture in $\H$.
A description of all closed $t$-analytic sets for the disk-algebra is given.
Moreover, we show that some of the assertions in \cite{dmz}
concerning the $O$-analyticity and $S$-regularity of certain Banach function algebras
 are not correct.
We also determine the largest set on which a Douglas algebra is pointwise regular.
\end{abstract}

 \maketitle


\section{Introduction}

In this note we will show that some refinements  of notions appearing
 in a paper by
Daoui, Mahzouli and Zerouali \cite{dmz} in connection with local/restricted decomposability
of multiplication operators on commutative, semisimple Banach algebras
 have a close connection with Alling's conjecture on the structure
of closed prime ideals in $\H$.

Let $X$ be a  compact  Hausdorff space. We denote by $C(X)$ the algebra of all
 continuous, complex-valued functions on $X$. With respect to the uniform norm, $\|\cdot\|_X$,
on $X$,    $C(X)$ is a unital, commutative, semisimple Banach algebra.

Let $A$ be a commutative, unital Banach algebra over the field of complex numbers $\C$. The  character space (or spectrum), $M(A)$, of $A$  is the set of all non-zero  multiplicative
linear functionals on $A$.  When endowed with the usual Gelfand topology (the relative weak-$*$-topology as a subset of $A^*$),
$M(A)$ becomes a compact Hausdorff space.
   Using Gelfand theory, we may identify $M(A)$ with the maximal ideal space of $A$. In the case where $A$ is semisimple, $A$ is isomorphic to the algebra $\hat A \subseteq C(M(A))$ consisting
   of the set of Gelfand transforms $\hat f: m\mapsto m(f)$  of elements $f\in A$.
In this case, identifying $f$ with $\hat f$, we may regard $A$ as a \emph{Banach function algebra} on $M(A)$.
With this identification, $\|f\|_{M(A)}$ is equal to the spectral radius of $f$ in the algebra $A$. If the spectral radius and the norm on $A$ are equal for all $f \in A$, then $A$ is a \emph{uniform algebra} on $M(A)$.

It is useful to remember the result that a commutative, unital Banach algebra $A$ is a uniform algebra if and only if
  $||f^2||=||f||^2$ for all $f\in A$.  Finally, the Shilov boundary, $\partial A$, of $A$
  is the smallest closed subset of $M(A)$ on  which every $f\in A$ achieves its maximum modulus.

  We refer the reader to the books of Dales \cite{Dales} (especially Chapter 4), Browder \cite{br},
  Gamelin \cite{gam},
  and Leibowitz \cite{lei} for various aspects of the theory of commutative Banach algebras.

Throughout this note, we will work only with semi-simple,   unital, commutative Banach algebras,
and  we look upon them  as Banach function algebras defined on their compact spectrum $M(A)$.

Some of our results are valid for more general algebras $A$ of complex-valued functions defined on sets other than $M(A)$, or for commutative Banach algebras which are not semisimple. However, these more general algebras are not our primary concern. For the sake  of clarity, we will not attempt to state our results in full generality.

\section{Partial regularity and $t$-analyticity: the definitions}

\subsection{Regularity of type I and type II}

First we recall the definitions of regularity and normality for Banach function algebras.
\begin{definition}
Let $A$ be a Banach function algebra. 
\begin{enumerate}
\item  $A$ is \emph{regular} if, for all $x\in M(A)$ and all
closed sets $E\subseteq M(A)\setminus\{x\}$, there exists $f\in A$ such that
 $f\equiv 1$ on $E$ and $f(x)= 0$;

 \item  $A$ is \emph{normal} if, for each pair of disjoint closed
 sets $E$ and $F$  in $M(A)$
 there exists $f\in A$  such that $f\equiv 1$ on $E$  and $f\equiv 0$ on $F$.
 \end{enumerate}
\end{definition}

 It is standard that regularity and normality are equivalent (see \cite[Proposition 4.1.18]{Dales}).
We next recall the definitions of the hull of an ideal and the kernel of a set, in order to define the hull-kernel topology on $M(A)$.

 \begin{definition}
 Let $A$ be a Banach function algebra. 
For $f\in A$, the \emph{zero set} of $f$ is the closed set $\{x\in M(A): f(x)=0\}$, and is denoted by $Z_A(f)$, or by $Z(f)$ if the algebra under consideration is clear.
Similarly, the \emph{hull} (or zero set)  of an ideal $J$ in $A$ is the closed set $\Inter_{f\in J} Z_A(f)$, and is denoted
by $Z_A(J)$, or $Z(J)$.

Let $E \subseteq M(A)$. Then the \emph{kernel} of $E$ is the closed ideal
\[
\{f\in A: f\equiv 0 \; \text{on}\; E\}\,,
\]
and  is denoted by $I_A(E)$ or $I(E)$.

Provided that there is no ambiguity in the algebra $A$ under consideration,
 we denote by $\hat E$ the hull of $I(E)$; that is $\hat E=Z(I(E))$.
The hull-kernel topology on $M(A)$ is  the topology whose closed sets are those sets
$E\subseteq M(A)$ such that $E=\hat E$.
\end{definition}

It is standard that the hull-kernel topology is a compact topology on $M(A)$ which is weaker than the Gelfand topology, and (hence) that these two topologies coincide if and only if the hull-kernel topology is Hausdorff. This occurs if and only if the Banach function algebra $A$ is regular (and hence normal). The hull-kernel topology and regularity are also closely related to the decomposability of multiplication operators on $A$, as discussed in \cite{n} (see also \cite{cf,fru,dmz,ln}).

For each ideal $J$ in $A$, we have that $Z(J)$ is, in fact, hull-kernel closed. Thus the hull-kernel
closed subsets of $M(A)$ are precisely the hulls of ideals in $A$. Moreover, for each $f\in A$, the hull of the principal ideal $fA$ is simply $Z(f)$, and so the zero sets of functions in $A$ are hull-kernel closed. In fact, the collection of \lq co-zero sets' $\{M(A)\setminus Z(f):f\in A\}$ is a base for the hull-kernel topology.

We now consider the identity map on $M(A)$, ${\rm Id}_{M(A)}$, regarded as a map from $M(A)$ with the hull-kernel topology to $M(A)$ with the Gelfand topology. It follows from the previous
paragraphs that
this map is continuous if and only if $A$ is regular.
In \cite{fs}, in order to discuss a localized (point by point) version of the connection between the hull-kernel topology and regularity, the points at which this map is continuous were investigated.

\begin{definition}

Let $A$ be a Banach function algebra. 
For each $f\in A$,
  $$\supp f:=\ov{\{x\in M(A): f(x)\not=0\}}$$
is the (closed) support of $f$.

\begin{enumerate}

\item
A point $x\in M(A)$ is called a {\it  regularity point of type one} for $A$, denoted by
 $x\in \Re_I$,  if for
every neighborhood $V$ of $x$ in $M(A)$, there exists
a function $f\in A$ such that $f(x)=1$  and the support of $f$ is contained in $V$.

\item
Let $X\subseteq M(A)$.
Then we say that  $A$ is {\it pointwise regular on $X$},  if  every  $x\in X$  is  a regularity point
of  type one for $A$.
We also say in that case  that $X$ is
{\it a set of regularity (of)  type one for $A$}.
\end{enumerate}
\end{definition}
It is clear that $A$ is regular if and only if $M(A)$ is a set of regularity  type one  for $A$.
 Later on, in Section \ref{three}, we will  determine for special algebras the largest set
  $X_{\max}$ in $M(A)$ on which $A$ is pointwise regular. That set may be empty, though.
 This is the case, for example, for the disk-algebra $A(\ov\D)$.
If $X$ contains the Shilov boundary of $A$ and if $A$ is pointwise regular on $X$,
then  $A|_X$  is a `regular function algebra on $X$' in the sense
as defined for example in \cite[p. 412]{Dales} or \cite[p. 189]{hof}.
We do not know whether the converse holds.

It follows from the results in  \cite{fs} that the points in
$\Re_I$ are exactly those points $x\in M(A)$ for which  the identity map
${\rm Id}_{M(A)}$ is continuous at $x$ in the sense described above \cite[Lemma 2.1]{fs}.
Hence we may say that $x$ is a regularity point of type one  if and only if $x$ is an
{\it $hk$-continuity point} for ${\rm Id}_{M(A)}$, (or briefly called a
 \lq point of continuity for $A$' in \cite{fs}).

It is also known that  $x\in M(A)$ is an $hk$-continuity point for ${\rm Id}_{M(A)}$ if and only if
every function in $A$ is continuous at $x$ when $M(A)$ is given the hull-kernel topology
(see \cite{fs}).
Of course, the set of all $hk$-continuity points for $A$ is the maximum $hk$-continuity set for $A$.
For completeness, let us say that $x\in M(A)$ is an {\it $hk$-discontinuity point},
if it is not an $hk$-continuity point.

Note also that if $S=M(A)\setminus X$, where $X\ss M(A)$,
then  our  notion of $X$ being a  set  of regularity type
one for $A$
is equivalent to the notion of $S$-regularity of $A$ given in \cite{dmz}.

Finally, we note that any regularity point of type one
(or equivalently  any $hk$-continuity point for $A$)
is necessarily contained in  the Shilov boundary $\partial A$ of $A$.
This fact, which is stated near the top of page 57 of \cite{fs}, is almost immediate from the definitions.

So this gives us one of many possible notions of
\lq partial regularity' for $A$.

We continue  this section by giving some related partial regularity conditions involving $k$-hulls, as introduced in \cite{gm1} in order to study spectral synthesis sets in $\H+C$; see also \cite{gm3}.

\begin{definition}
Let $A$ be a Banach function algebra.
\begin{enumerate}
\item
 For $x\in M(A)$, let $J_A(x)$ (or simply $J(x)$ if the algebra $A$ is unambiguous) be the ideal of those functions in $A$ that vanish
  (within $M(A)$) in a neighborhood of $x$.
  The  hull of  $J_A(x)$
   is denoted by $k_A(x)$ (or $k(x)$), and is called the $k$-hull of $x$.

\item   A point $x\in M(A)$ is called a {\it regularity point of  type two}, denoted by
   $x\in \Re_{II}$, if $k_A(x)=\{x\}$.
   \end{enumerate}
   \end{definition}

Note that  in  \cite{fs}, the elements   $x\in {\Re}_{II}$ were called $R$-points.

 For the readers convenience, we present a short proof that
  $A$ is regular if and only if each  $x\in M(A)$ is a  regularity point
  of  type two.

\begin{proposition}\label{khull}
 Let $A$ be a Banach function algebra.
 Then $A$ is  regular  if and only if $k_A(x)=\{x\}$ for every $x\in M(A)$;
 in other words ${\Re}_{I}=X\ssi {\Re}_{II}=X$.

\end{proposition}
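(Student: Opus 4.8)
The statement has two implications. One direction is essentially folklore and relies only on the equivalence of regularity and normality (quoted from \cite[Proposition 4.1.18]{Dales}); the other direction, that $\Re_{II} = M(A)$ forces $A$ to be regular, is the substantive part, and I would prove it by establishing that $\Re_{II} = M(A)$ implies the hull-kernel topology is Hausdorff, whence (by the standard fact recalled just before the Definition of regularity points) $A$ is regular.

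For the easy direction, suppose $A$ is regular. Fix $x \in M(A)$ and any $y \in M(A)$ with $y \neq x$. By regularity (equivalently normality), choosing the disjoint closed sets $\{x\}$ and $\{y\}$, there is $f \in A$ with $f \equiv 1$ on a neighborhood of... — more carefully, normality gives $f$ with $f \equiv 0$ on a neighborhood of $x$ and $f \equiv 1$ on a neighborhood of $y$ (apply normality to the disjoint closed sets $\overline{V_x}$ and $\overline{V_y}$ for suitably small neighborhoods). Then $f \in J_A(x)$ but $f(y) = 1 \neq 0$, so $y \notin Z(J_A(x)) = k_A(x)$. Since $x \in k_A(x)$ trivially (every $f \in J_A(x)$ vanishes at $x$), we conclude $k_A(x) = \{x\}$, i.e. $x \in \Re_{II}$. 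As $x$ was arbitrary, $\Re_{II} = M(A)$.

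For the converse, assume $k_A(x) = \{x\}$ for every $x \in M(A)$. I want to show the hull-kernel topology is Hausdorff, or equivalently (and more directly here) that $A$ is normal. Take disjoint closed sets $E, F \subseteq M(A)$; I aim to separate them. The key observation is that $k_A(x) = \{x\}$ says precisely that for every $y \neq x$ there exists $f \in J_A(x)$ with $f(y) \neq 0$; that is, the cozero sets of functions in $J_A(x)$, together with a neighborhood of $x$ on which such functions vanish, cover $M(A)$. Fixing $x \in F$ and letting $y$ range over the compact set $E$, a compactness argument produces finitely many functions $f_1, \dots, f_n \in J_A(x)$ whose cozero sets cover $E$; then $g = \sum |f_i|^2$ (or, to stay inside $A$, a suitable polynomial combination such as $1 - \prod(1 - \lambda_i f_i \bar f_i)$ — but since $\bar f_i$ need not lie in $A$, better to argue via the standard regularity-from-$\Re_{II}$ route) is nonzero on $E$ and vanishes near $x$. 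The cleanest route is to cite that $\Re_{II} = M(A)$ is equivalent to every point having a hull-kernel neighborhood base, hence the hull-kernel topology is Hausdorff, hence it coincides with the Gelfand topology, hence $A$ is regular; this is implicit in \cite{fs} and the discussion preceding the definitions.

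**Main obstacle.** The delicate point is passing from the pointwise condition "$k_A(x) = \{x\}$" to a genuine separation of two arbitrary disjoint closed sets, i.e. upgrading a point-vs-point separation to a set-vs-set separation while staying inside the algebra $A$ (one cannot freely use $\bar f$ or $|f|$). I expect the proof to handle this exactly as in the classical proof that "regular at every point in the $k$-hull sense $\Rightarrow$ regular": first separate a point from a closed set not containing it (a compactness argument over the closed set, combined with the fact that $J_A(x)$ is an ideal so one can form products/sums and then normalize using that the resulting function is bounded away from $0$ on a compact set and hence invertible there after a standard functional-calculus or partition-type manoeuvre), and then bootstrap from point-vs-closed-set to closed-set-vs-closed-set via another compactness argument. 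The routine calculations I would omit are precisely these two compactness-plus-algebra manipulations, referring the reader to \cite[Section 4.1]{Dales} or \cite{fs} for the analogous standard arguments.
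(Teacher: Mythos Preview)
Your forward direction is fine and matches the paper's in spirit.

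For the backward direction, however, you have chosen the wrong orientation and this creates exactly the obstacle you then struggle with. You fix $x$ (the point to be separated from $E$) and reach for functions in $J_A(x)$ that are nonzero at various $y\in E$; you then need to combine finitely many such $f_i$ into a single element of $A$ that is nonzero on \emph{all} of $E$. As you yourself note, $\sum |f_i|^2$ and similar devices require $\bar f_i\in A$, which is unavailable in a general Banach function algebra. Your fallback---citing \cite{fs} or \cite{Dales} for a ``standard regularity-from-$\Re_{II}$ route''---is not a proof but a deferral, and the paper's point is precisely to give a short self-contained argument.

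The paper sidesteps your obstacle with a simple role reversal. Given $m\in M(A)$ and a closed set $E\subseteq M(A)\setminus\{m\}$, apply the hypothesis at each $y\in E$ rather than at $m$: since $k_A(y)=\{y\}$ and $m\neq y$, there is $f_y\in J_A(y)$ with $f_y(m)=1$. Each $f_y$ vanishes on a neighbourhood of $y$, so by compactness finitely many such neighbourhoods cover $E$; the \emph{product} $f=f_{y_1}\cdots f_{y_n}$ then vanishes on all of $E$ while $f(m)=1$. Products stay in $A$ with no fuss, and the ``combine to be nonzero on a set'' problem never arises. This shows $m\in\Re_I$ directly, and since $m$ was arbitrary, $A$ is regular. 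The whole backward argument is three lines; no partition-of-unity or functional-calculus machinery is needed.
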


\begin{proof}
Suppose that $A$ is  regular; that is ${\Re}_{I}=X$.   Let $x,y\in M(A)$ with $x\not=y$.
Since $y\in {\Re}_{I}$, there is $f\in J_x$ with $f(y)=1$. Thus $y\notin k_A(x)$.
Hence $k_A(x)=\{x\}$.

Now suppose that $k_A(x)=\{x\}$ for every $x\in M(A)$.  Let  $m\in M(A)$ and $E$
a compact subset of $M(A)\setminus\{m\}$.
 Then, for each $y\in E$, there exists $f_y\in J_A(y)$  with
$f_y(m)=1$.  A standard compactness argument then yields the desired
  function $f=f_{y_1}\dots f_{y_n}\in A$
with $f\equiv 0$ on $E$ and $f(m)=1$. Thus $m$ is a regularity point of type one and
 so $A$ is regular.
\end{proof}

The properties of $R$-points and $hk$-continuity points were discussed in detail in \cite{fs}. If $x$ is not an $R$-point for $A$, then the points of $k(x)\setminus\{x\}$ are points of
$hk$-discontinuity for $A$. Similarly,  $x$ is a point of $hk$-discontinuity for $A$ if and only if
there is at least one point $y \in M(A) \setminus\{x\}$ with $x \in k(y)$; each such point $y$ is then a non-$R$-point for $A$.
Another way to say this is that $x$ is a point of $hk$-continuity for $A$ if and only if there are no points $y \in M(A) \setminus\{x\}$ with
$x \in k(y)$. In general, though,  there are no inclusion relations between the set $\Re_I$ of
$hk$-continuity points and  the set
$\Re_{II}$ of $R$-points.

\subsection{$t$-analyticity}

We now define the notion of $t$-analyticity for subsets of $M(A)$, a notion which is in some sense
opposite to the notions of sets of regularity type one or two. It incorporates the fact that some trace of analyticity remains when the functions are restricted to a given set, essentially the identity principle/quasianalyticity for the restrictions of the functions.

\begin{definition}
Let $A$ be a Banach function algebra, and let $E\ss M(A)$.
We say that $E$ is a {\it $t$-analytic set} for $A$ (or simply \lq $E$ is \emph{$t$-analytic}' if the algebra $A$ is clear),
if for every $f\in A$ and every relatively open, nonvoid subset $U$ in $E$
one has that $f\equiv 0$
on $U$ implies that $f\equiv 0$ on $E$.
We say that $E$ is \emph{componentwise $t$-analytic} (for $A$) if every connected component of $E$ is a $t$-analytic set.
\end{definition}

The most obvious $t$-analytic sets are singletons and  the empty set.

 In \cite{dmz} the authors only consider the case where $E$ is an open set  in $M(A)$.
 They say that a Banach function algebra $A$ is \textit{$O$-analytic} if $O\ss M(A)$ is an open $t$-analytic set for $A$. They claim and attempt to prove that
 there exists a maximum open set $O$ for which $A$ is $O$-analytic. However this claim and its proof are incorrect.  We give the following easy example to demonstrate this. This example also shows that a union of open $t$-analytic sets need not be $t$-analytic, contrary to an implicit assumption made in \cite{dmz}

Let $X$ be a (non-empty) compact  set in $\C$ and $X^\circ$ its interior. Then $A(X)$ denotes the uniform algebra
$\{f \in C(X): f \text{ is analytic on } X^\circ\,\}$. By a theorem of Arens (see  \cite[Theorem 4.3.14]{Dales} or \cite[Chapter II, Theorem 1.9]{gam}), we have $M(A(X)) = X$.
Now consider
 the closed disks $O_1=\{z\in\C: |z|\leq 1\}$ and $O_2=\{z\in\C: |z-3|\leq 1\}$. Set $X=O_1 \union O_2$, and let $A$ be the uniform algebra $A(X)$.
Then $M(A)=X$, and the sets $O_1$ and $O_2$ are open $t$-analytic sets for $A$,
  but $O_1\union O_2$ is clearly
  not a $t$-analytic set for $A$. Moreover, no $t$-analytic set for $A$ can meet both $O_1$ and $O_2$. Thus there is no maximum open $t$-analytic set for $A$ in this case.

  All the subsequent \lq\lq results'' in \cite{dmz} that use this incorrect claim must  be
adapted appropriately.

Note, however, that it is true that, whenever $O_1$ and $O_2$ are open $t$-analytic sets for $A$ such that $O_1 \cap O_2 \neq \emptyset$, then
$O_1\union O_2$ is also an open $t$-analytic set for $A$.

\begin{lemma}\label{unions}
Let $A$ be a Banach function algebra and $C\ss M(A)$  a connected open set.
 Suppose that $C$ is a union of open sets $C_\lambda$ which  are $t$-analytic for $A$.
Then $C$ is $t$-analytic for $A$.
\end{lemma}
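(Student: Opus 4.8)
The plan is to fix $f\in A$ and a relatively open nonempty $U\subseteq C$ with $f\equiv 0$ on $U$, and to show $f\equiv 0$ on all of $C$. Let $W=\{x\in C : f\equiv 0 \text{ on a neighbourhood of } x \text{ in } M(A)\}$, i.e.\ the interior (relative to $M(A)$, equivalently relative to $C$ since $C$ is open) of the zero set of $f$ intersected with $C$. Since $U$ is open in $C$ and $C$ is open in $M(A)$, $U$ is open in $M(A)$, so $U\subseteq W$ and $W\neq\emptyset$. The set $W$ is open by construction. The key point will be to show $W$ is also closed in $C$; then, since $C$ is connected, $W=C$, and in particular $f\equiv 0$ on $C$, as desired.

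To see that $W$ is relatively closed in $C$, take $x\in C\cap\overline{W}$. Since the sets $C_\lambda$ cover $C$, pick $\lambda$ with $x\in C_\lambda$. Now $C_\lambda$ is open, so $C_\lambda\cap W$ is a relatively open subset of $C_\lambda$; I claim it is nonempty. Indeed, $x\in\overline{W}$ and $C_\lambda$ is an open neighbourhood of $x$, so $C_\lambda\cap W\neq\emptyset$. On $C_\lambda\cap W$ we have $f\equiv 0$ (since $W\subseteq Z(f)$), and $C_\lambda\cap W$ is a nonempty relatively open subset of the $t$-analytic set $C_\lambda$, so by $t$-analyticity of $C_\lambda$ we get $f\equiv 0$ on all of $C_\lambda$. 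But $C_\lambda$ is open in $M(A)$ and contains $x$, so in fact $x\in W$. Hence $C\cap\overline{W}\subseteq W$, i.e.\ $W$ is closed in $C$.

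Combining the two observations, $W$ is a nonempty relatively clopen subset of the connected set $C$, so $W=C$; thus $f\equiv 0$ on $C$. Since $f$ and $U$ were arbitrary, $C$ is $t$-analytic for $A$.

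The main obstacle — really the only subtlety — is the bookkeeping about which topology ``open'' and ``relatively open'' refer to at each step: one must use that $C$ (and each $C_\lambda$) is open in $M(A)$ to pass freely between ``open in $C$'', ``open in $C_\lambda$'' and ``open in $M(A)$'', and to ensure that the set $W$ on which $f$ vanishes locally is genuinely open in $M(A)$ so that the conclusion $x\in W$ propagates. No quantitative or analytic input is needed beyond the definition of $t$-analyticity and the connectedness of $C$; the argument is purely a clopen/connectedness argument.
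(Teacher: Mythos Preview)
Your proof is correct and follows essentially the same connectedness argument as the paper's: both propagate the vanishing of $f$ through the cover $\{C_\lambda\}$ using $t$-analyticity and then invoke connectedness of $C$. The only cosmetic difference is that the paper partitions the index set into those $C_\lambda$ on which $f$ vanishes and those on which it does not, obtaining a contradiction when both classes are nonempty, whereas you package the same idea as a direct clopen argument via the set $W$; neither version requires anything the other does not.
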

\begin{proof}
Let $U$ be an open set in $M(A)$ satisfying $U\inter C\not=\emp$.
Suppose  that $f\in A$ vanishes
 identically on $U\inter C$. We need to show that
$f\equiv 0$ on $C$.

By our assumption, there is some  $C_{\lambda_0}$  such that $U\inter  C_{\lambda_0}\not=\emp$.
Since $C_{\lambda_0}$ is $t$-analytic, $f\equiv 0$ on $C_{\lambda_0}$.
Now suppose to
the contrary that $f$ does not vanish identically on every $C_\mu$.
Let $\mathfrak C_1$ be the union  of all those $C_\mu$ such that $f\equiv 0$ on $C_\mu$ and let $\mathfrak C_2$ be the union of the remaining ones. Note that  by our hypothesis
$\mathfrak C_1$ and $\mathfrak C_2$ are nonvoid open sets.

Since $C$ is connected,
$\mathfrak C_1\inter \mathfrak C_2\not=\emp$. Now $f\equiv 0$ on the open set
$\mathfrak C_1$. Let $\mu_0$ be chosen  so that $C_{\mu_0}\ss \mathfrak C_2$ and
$C_{\mu_0}\inter \mathfrak C_1\not=\emp$.
The $t$-analyticity of $C_{\mu_0}$ now implies that $f\equiv 0$ on $C_{\mu_0}$.
This is a contradiction. Thus
 $\mathfrak C_2=\emp$ (since $C_{\lambda_0}\ss \mathfrak C_1\not=\emp$),
   and so $f\equiv 0$ on $C$.  Hence $C$ is $t$-analytic.
\end{proof}

\begin{proposition}
Let $A$ be a Banach function algebra, 
and let $\mathcal T$ be the set of all those open subsets $U$ of $M(A)$ such that all of the connected components of $U$ are also open in $M(A)$. Set
\[
\mathcal{S} = \{U \in \mathcal{T}: U \text{ is componentwise $t$-analytic for } A\,\}\,.
\]
Then $\bigcup_{U \in \mathcal S} U$ is also in $\mathcal S$, and is thus the maximum element of $\mathcal S$.
\end{proposition}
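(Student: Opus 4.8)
The plan is to show directly that $W:=\bigcup_{U\in\mathcal S}U$ belongs to $\mathcal S$; since $W$ manifestly contains every member of $\mathcal S$, this automatically makes it the maximum element of $\mathcal S$. Two things need to be checked: first that $W\in\mathcal T$, i.e.\ that every connected component of $W$ is open in $M(A)$, and second that $W$ is componentwise $t$-analytic.

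For the first point I would fix a connected component $D$ of $W$ and an arbitrary point $x\in D$. By definition of $W$ there is some $U\in\mathcal S$ with $x\in U$, and since $U\in\mathcal T$ the connected component of $U$ containing $x$, call it $C_x$, is open in $M(A)$. As $C_x$ is a connected subset of $W$ that meets $D$, we have $C_x\subseteq D$; hence $D$ contains the open neighbourhood $C_x$ of $x$. Since $x\in D$ was arbitrary, $D$ is open, and therefore $W\in\mathcal T$.

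For the second point, fix again a connected component $D$ of $W$; by the first step $D$ is open and connected. For each $x\in D$ pick $U$ and $C_x$ as above. Then $C_x$ is an open subset of $D$ containing $x$, and $C_x$, being a connected component of the componentwise $t$-analytic set $U$, is itself $t$-analytic for $A$. Since $x\in C_x\subseteq D$ for every $x\in D$, we have $D=\bigcup_{x\in D}C_x$, so $D$ is a connected open set that is a union of open $t$-analytic sets. Lemma~\ref{unions} now applies verbatim and yields that $D$ is $t$-analytic for $A$. As $D$ was an arbitrary component of $W$, the set $W$ is componentwise $t$-analytic, whence $W\in\mathcal S$, as required.

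The whole argument is essentially bookkeeping about connected components, and I do not expect a genuine obstacle. The one place to be attentive is the transition between components of the individual sets $U$ and components of $W$: one must observe that the component $C_x$ of $x$ in $U$ is simultaneously open in $M(A)$ (because $U\in\mathcal T$) and $t$-analytic for $A$ (because $U$ is componentwise $t$-analytic), and that it is contained in the component $D$ of $x$ in $W$. Once this is pinned down, the connectedness of $D$ together with Lemma~\ref{unions} does all the substantive work.
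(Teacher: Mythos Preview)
Your proof is correct and follows essentially the same approach as the paper: write $W$ as a union of the open, $t$-analytic connected components $C^U_\lambda$ of the various $U\in\mathcal S$, observe that each component $D$ of $W$ is the union of those $C^U_\lambda$ it contains (hence open), and then apply Lemma~\ref{unions} to conclude that $D$ is $t$-analytic. The only difference is cosmetic---you index the pieces by points $x\in D$ rather than by the components themselves---but the logical structure is identical.
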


\begin{proof}
Let $V=\bigcup_{U \in \mathcal S} U$. We first show that $V\in \mathcal{T}$.
Obviously $V$ is open. Let $C$ be a connected component of $V$.  Write
$U=\Union_{\lambda\in \Lambda_U} C^U_\lambda$, where $C^U_\lambda$ is a connected
component of $U$.  Then
$$V=\Union_{U\in \mathcal S}\Union_{\lambda\in \Lambda_U} C^U_\lambda.$$
Since $V$ is a union of  connected open  sets,
it follows that the component $C$ of $V$  itself is the union of all those $C^U_\lambda$
 that are entirely contained in  $C$.
All these $C^U_\lambda$ are open; hence $C$ is open. Thus  $V\in \mathcal{T}$.

Next we show that $V\in \mathcal S$. Again, let $C$ be a connected component of $V$.
 In the representation for $C$ above, all the $C^U_\lambda$
are open and $t$-analytic for $A$; hence, by Lemma \ref{unions}, $C$ is $t$-analytic.
\end{proof}
In particular, if $M(A)$ is locally connected, then $\mathcal T$ is simply the whole of the topology on $M(A)$, and we see that the set $V$
 is the maximum, open, componentwise $t$-analytic set for $A$. However, if $M(A)$ is not locally connected, there may be a shortage of connected open sets.

For example, the spectrum of $\H$ is not locally connected. This follows from the fact
that for \IBP s $b$ with infinitely many zeros  and $\e>0$ small, the intersection of the level set
$\{x\in M(\H): |b(x)|<\e\}$ of $b$ with $\D$ is a disjoint union of infinitely many components
 (see \cite{ho}).

In general, when $E_1$ and $E_2$ are $t$-analytic sets for $A$ with $E_1 \cap E_2 \neq \emptyset$, it is not necessarily the case that $E_1 \cup E_2$ is a $t$-analytic set for $A$. Nor is it necessarily the case, even when $M(A)$ is locally connected, that there is a maximum, componentwise $t$-analytic set for $A$. To illustrate this, consider the following easy example, similar to our example above.

\begin{example}
\label{kissing_disks}
Consider the closed disks $X_1=\{z\in\C: |z|\leq 1\}$ and $X_2\break=\{z\in\C: |z-2|\leq 1\}$.
Set $X=X_1 \union X_2$, and let $A$ be the uniform algebra $A(X)$.
  Then $M(A)=X$, which is locally connected, and the sets $X_1$ and $X_2$ are (non-open) $t$-analytic sets for $A$ with
  $X_1 \cap X_2 = \{1\} \neq \emptyset$. However, $X=X_1\cup X_2$ is not $t$-analytic. For example, the function which is constantly zero on $X_2$ but equal to $z-1$ on $X_1$ is a function in $A$ which is identically zero on a non-empty open subset of $X$, but which is not constant on $X$.

  Since $X_1$, $X_2$ and $X$ are connected, it follows that there is no maximum, componentwise $t$-analytic set for $A$ in this case.
\end{example}

\subsection{Scheme of the paper}

In \cite{dmz} the existence of a smallest set $S_A$  for which $A$ is $S_A$-regular
is shown. In view of our earlier comments, $S_A$ is, in fact, equal to the set of all $hk$-discontinuity points for $A$, and so this is the complement of the maximum $hk$-continuity set for $A$.
It is claimed in \cite{dmz} that for Douglas algebras $D$, the set $S_D$ is equal to the union of the
support sets of all the representing measures. This is not correct. In fact, we will
 show in Section \ref{three} that $S_D$ equals $M(D)\setminus \partial D$,
 where $\partial D$ is the Shilov
  boundary of $D$.
 Also, their determination of the largest open set $O_D$ for which $D$  is $O_D$-analytic
 is not correct either. They appear to have been under the impression \ that the support sets
  (on $\partial D$) of the representing measures $\mu_m$
 for points $m\in M(D)$ have non-void interior. This is not correct.
 We will show in Section \ref{douglas} that for many, but not all,  Douglas algebras,
 including $\H+C$, there is no non-empty open $t$-analytic set $O\ss M(D)$.
 In this section  we will also  discuss the general structure of $t$-analytic sets in Douglas algebras,
  with emphasis on  the algebra $\H+C$. For certain points $x\in M(\H+C)$ we will explicitly determine the maximal $t$-analytic  sets containing $x$.

This will be preceded in  Section  \ref{analytic} by the proof of the fact
  that every $t$-analytic set for a Banach function algebra $A$
   is contained in a  maximal $t$-analytic set.
   These maximal $t$-analytic sets will automatically be closed.
   We will also show that, apart from singletons, the $t$-analytic sets for $A$
   do not contain any regularity points of type I or II.

  A complete characterization of the closed $t$-analytic sets for
  the disk-algebra is given in Section \ref{analytic}, too.

 This section also contains one of our main results: namely that  each
   $t$-analytic set induces a closed prime ideal in $A$.
    Alling's conjecture tells us that in $\H$ every non-maximal closed prime
  ideal coincides with the set of functions  vanishing identically on a nontrivial
  Gleason part. In the final section, Section \ref{alling}, we will discuss
   the relations between Gleason parts,
   the hulls of closed prime ideals  and the  $t$-analytic sets and
    address the question whether there are any counterexamples to Alling's conjecture.

\section{Partial regularity for Douglas algebras}\label{three}
As usual $\T$ denotes the unit circle $\{z\in\C: |z|=1\}$.
 Let $L^\infty(\T)$ be the space of (equivalence classes) of  measurable and
  essentially bounded
functions on $\T$, endowed with the essential supremum norm on $\T$.
Moreover, let $\H(\T)$ be the  subalgebra  of all those functions in $L^\infty(\T)$ whose
negative Fourier coefficients vanish.
 It is well known that, with respect to the essential supremum norm on $\T$,
$\H(\T)$ is isometrically isomorphic
to the uniform algebra $\H$ of all bounded analytic functions in $\D$.
A  closed subalgebra of $L^\infty(\T)$ that strictly contains $\H(\T)$ is called
a Douglas algebra.

The smallest Douglas algebra  is $\H(\T)+C(\T)$, which we denote for short by
$\H+C$. Here $C(\T)$ is the set of all complex-valued continuous functions on $\T$. The exact structure of these algebras is given in the celebrated
Chang-Marshall theorem (for a nice exposition see \cite{G}).  One of the consequences
is that the spectrum, $M(D)$, of  a Douglas algebra $D$ can be identified with a closed
subset of $M(\H+C)$.

Note that $M(\H+C)$ itself is $M(\H)\setminus\D$, which is sometimes called the corona
of the disk.

 Also, $M(D_1)=M(D_2)$ if and only if $D_1=D_2$.  Moreover,
the Shilov boundary, $\partial D$, of any Douglas algebra $D$  coincides with $M(L^\infty(\T))$.
In particular, $\partial D$ is a proper subset of $M(D)$ for any $D\not=L^\infty(\T)$.
 Whereas $L^\infty(\T)$, the largest Douglas algebra,
is a regular algebra (see also \cite{mo}), we have the following situation for  its proper subalgebras.
In what follows, we denote $k_{\H+C}(x)$ by $k(x)$, whenever $x\in M(\H+C)$.

\begin{theorem}
The Shilov boundary, $\partial D$, is the largest set on which a Douglas algebra
$D$ is pointwise regular.
\end{theorem}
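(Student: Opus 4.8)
The plan is to establish the equivalent assertion $\Re_I(D)=\partial D$, where $\Re_I(D)$ is the set of regularity points of type one of $D$; since ``$D$ is pointwise regular on $X$'' means exactly $X\subseteq\Re_I(D)$, ``$\partial D$ is the largest such $X$'' is precisely the equality $\Re_I(D)=\partial D$. One inclusion is free: as recalled in Section~2, every regularity point of type one lies in the Shilov boundary, so $\Re_I(D)\subseteq\partial D$. All the content is therefore the reverse inclusion --- that every $x\in\partial D=M(L^\infty(\T))$ is a regularity point of type one for $D$. By the $hk$-continuity reformulation from \cite{fs} it suffices to find, for each such $x$ and each Gelfand-open neighbourhood $V$ of $x$ in $M(D)$, a function $f\in D$ with $f(x)=1$ and $\{f\neq 0\}\subseteq V$ (applying this with a smaller neighbourhood whose closure lies in $V$ then yields the condition $\supp f\subseteq V$ appearing in the definition of $\Re_I$).

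So fix $x\in\partial D$ and such a $V$. Since $\partial D=M(L^\infty(\T))$ is the Stone space of the measure algebra of $\T$, it is totally disconnected; hence $V\cap\partial D$ contains a relatively clopen neighbourhood $K$ of $x$, and we may write $K=\{m\in\partial D:\widehat{\chi_E}(m)=1\}$ for some measurable $E\subseteq\T$. On the Shilov boundary, the separation of $x$ from $(M(D)\setminus V)\cap\partial D\subseteq\partial D\setminus K$ is thus witnessed by the idempotent $\chi_E\in L^\infty(\T)$; the difficulty is that $\chi_E$ need not belong to the Douglas algebra $D$.

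The heart of the proof --- and the step I expect to be the main obstacle --- is to pass from $\chi_E$ to an honest $f\in D$ whose set of non-vanishing in \emph{all} of $M(D)$ (not merely on $\partial D$) is confined to $V$. Two rigidity phenomena must be navigated. First, by the F.\ and M.\ Riesz theorem no nonzero element of $D$ can vanish on a subset of $\partial D$ of positive measure without vanishing identically, so the vanishing of $f$ has to be produced using the non-analytic generators of $D$ that the Chang--Marshall theorem provides (conjugates $\overline u$ of inner functions; already $\overline z$ in the case $D=\H+C$), not by $H^\infty$ alone --- which also dictates a compatible choice of $K$ above. Second, a function vanishing on a clopen piece of $\partial D$ need not have small support on the ``analytic'' part $M(D)\setminus\partial D$; to control this I would use the representing measures $\mu_m$ of points $m\in M(D)$, via $\widehat f(m)=\int_{\partial D}\widehat f\,d\mu_m$ together with the fact that $\mu_m$ is supported over $\pi(m)$, where $\pi=\hat z\colon M(D)\to\T$, so that once $\widehat f$ is suitably pinned down on $\partial D$ it is automatically controlled off it. Combining this with an Amar--Lederer peak function for $x$ in $H^\infty$ --- legitimate because every point of $M(L^\infty(\T))$ is a one-point Gleason part, hence has a point-mass representing measure --- one assembles $f\in D$ with $f(x)=1$ and $\{f\neq 0\}\subseteq V$.

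Carrying this out for every $x\in\partial D$ and every $V$ gives $\partial D\subseteq\Re_I(D)$; together with $\Re_I(D)\subseteq\partial D$ this yields $\Re_I(D)=\partial D$, which is exactly the statement that $\partial D$ is the largest set on which $D$ is pointwise regular.
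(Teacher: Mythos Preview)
Your overall strategy is sound: the inclusion $\Re_I(D)\subseteq\partial D$ is indeed free, and the content is $\partial D\subseteq\Re_I(D)$. However, the hard direction in your proposal is not actually proved --- you yourself flag the construction of $f$ as ``the step I expect to be the main obstacle'' and then list ingredients (clopen pieces of $\partial D$, representing measures, Amar--Lederer peak functions) without showing how they combine to produce the required $f\in D$. In particular, you never establish the crucial point: for $m\in M(D)\setminus V$ with $m\notin\partial D$, why should your function vanish at $m$? The formula $\widehat f(m)=\int\widehat f\,d\mu_m$ forces $\widehat f(m)=0$ only if $\widehat f$ vanishes on \emph{all} of $\supp\mu_m$, and you have not argued that your $f$ does this, nor that $x$ stays away from $\overline{\bigcup_{m\notin V}\supp\mu_m}$. (Also, your F.\ and M.\ Riesz remark is misstated: it applies to $H^\infty$, not to all of $D$; elements of a Douglas algebra certainly can vanish on positive-measure pieces of $\partial D$, as $\chi_E\in L^\infty$ already shows at the top, and continuous functions vanishing on an arc show in $\H+C$.)

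The paper's proof takes a completely different and much shorter route that sidesteps all of this. Rather than building a single global $f$, it works pointwise on the complement and then uses compactness. The key input is the $k$-hull computation of Gorkin--Mortini \cite{gm1,gm3}: for $y\in M(\H+C)$ one has $k_{\H+C}(y)\cap M(L^\infty)=\{y\}$ if $y\in M(L^\infty)$ and $k_{\H+C}(y)\cap M(L^\infty)=\emptyset$ otherwise. Since $x\in M(L^\infty)$ and $x\neq y$ for each $y\in M(D)\setminus V$, in either case $x\notin k_{\H+C}(y)$, so there is $f_y\in \H+C\subseteq D$ vanishing on a neighbourhood of $y$ with $f_y(x)=1$. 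A finite product $f_{y_1}\cdots f_{y_n}$, coming from compactness of $M(D)\setminus V$, then has support in $V$ and value $1$ at $x$. Note that this already produces $f\in\H+C$, independently of which Douglas algebra $D$ is under consideration --- a uniformity your representing-measure approach does not obviously deliver. If you wished to rescue your line of argument, the missing lemma (that $x$ is separated, by a function vanishing on an open set, from every $y\neq x$ in $M(D)$) is essentially equivalent to these $k$-hull facts, so you would end up re-deriving the results of \cite{gm1,gm3}.
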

This corrects statements in  the  paper \cite{dmz} by Daoui, Mahzouli and  Zerouali.

\begin{proof}
By the remarks that followed the definition  of regularity points, we first recall
that any regularity point of type one (if it exists) is necessarily contained in $\partial D$.
Next we show that $D$, actually, is pointwise regular on $\partial D$.  So
let $x\in \partial D$ and let $V$ be
an open neighborhood
of $x$ in $M(D)$. Let $y\in M(D)\setminus V$. Note that $\partial D=M(L^\infty)$.
Since by \cite{gm1,gm3} $k(y)\inter M(L^\infty)=\{y\}$ whenever $y\in M(L^\infty)$ and
$k(y)\inter M(L^\infty)=\emp$ whenever $y\in M(\H+C)\setminus M(L^\infty)$,
we see that there exists $f_y\in \H+C\ss D$ such that $f_y(x)=1$ and $f_y\equiv 0$ in a neighborhood
(within $M(\H+C)$) of $y$.  Since $ M(D)\setminus V$ is compact, we obtain a function
$f=f_{y_1}\dots f_{y_n}\in\H+C$ with $f(x)=1$ and $f\equiv 0$ on $M(D)\setminus V$.
Hence $D$ is pointwise regular on $\partial D$.  Thus $\partial D$ is
 the largest set on which $D$ is pointwise regular.
\end{proof}

As an immediate corollary we obtain

\begin{corollary}
Let $D$ be a Douglas algebra. Then the set of $hk$-continuity points for $D$ is equal to the Shilov boundary, $\partial D$.
\end{corollary}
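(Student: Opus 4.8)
The plan is to observe that this corollary is essentially a reformulation of the preceding theorem, once we recall the dictionary between the two descriptions of partial regularity set up in Section~2. First I would recall from \cite{fs} (see \cite[Lemma 2.1]{fs}, as quoted in the discussion following the definition of regularity points) that the set of $hk$-continuity points for a Banach function algebra $A$ coincides exactly with $\Re_I$, the set of regularity points of type one for $A$; equivalently, $x$ is an $hk$-continuity point if and only if every $f \in A$ is continuous at $x$ when $M(A)$ is given the hull-kernel topology. Thus it suffices to prove that $\Re_I = \partial D$ for a Douglas algebra $D$.

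For the inclusion $\partial D \ss \Re_I$, I would invoke directly the theorem just proved: it shows that $D$ is pointwise regular on $\partial D$, which by definition means precisely that every point of $\partial D$ is a regularity point of type one for $D$. For the reverse inclusion $\Re_I \ss \partial D$, I would use the elementary fact recalled after the definition of regularity points — attributed there to page~57 of \cite{fs} — that any regularity point of type one for $A$ necessarily lies in the Shilov boundary $\partial A$. Combining the two inclusions gives $\Re_I = \partial D$, and hence, by the first paragraph, the set of $hk$-continuity points for $D$ equals $\partial D$.

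I do not expect any genuine obstacle here: all the content is carried by the theorem, and the corollary merely repackages it in the language of $hk$-continuity. The one point that must not be glossed over is the equivalence `$hk$-continuity point $\iff$ regularity point of type one', since without it the statement about $hk$-continuity points would not be literally the same as the statement about pointwise regularity; but this equivalence is exactly the result of \cite{fs} already quoted in Section~2, so nothing new is required. If one preferred a self-contained argument, one could instead re-run the proof of the theorem verbatim, noting that the function $f = f_{y_1}\cdots f_{y_n} \in \H + C \ss D$ constructed there witnesses continuity of $\mathrm{Id}_{M(D)}$ (in the relevant sense) at $x$; but invoking the stated equivalence is cleaner.
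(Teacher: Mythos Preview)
Your proposal is correct and matches the paper's approach: the paper gives no separate proof for this corollary, presenting it explicitly as an ``immediate corollary'' of the preceding theorem via exactly the equivalence (from \cite{fs}) between $hk$-continuity points and regularity points of type one that you invoke. Your write-up simply spells out the two inclusions that the paper leaves implicit.
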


In particular, if $D=\H+C$, then the set of points of regularity type I coincides with $\partial D$;
that is $\Re_I=\partial D$.
It is not known  whether  in $M(\H+C)$  there are points of regularity type II
(see \cite{gm1, gm3} and \cite{izu}.)
Note that this highly contrasts with the situation in  $\H$, where there are no regularity points
of type I and II.

\section{$t$-analytic sets}\label{analytic}

\subsection{General properties of $t$-analytic sets}

Throughout this section, $A$ is a Banach function algebra.
The following  result is immediate from the definition of a $t$-analytic set.

\begin{obs}
Let $E\ss M(A)$.
Then $E$ is a $t$-analytic set for $A$ if and only if
 $\overline E$ is  a $t$-analytic set for $A$.
\end{obs}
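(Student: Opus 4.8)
The plan is to reduce everything to two elementary facts. First, every $f\in A$ is continuous on $M(A)$ in the Gelfand topology, so if $f\equiv 0$ on a set $F\ss M(A)$ then $f\equiv 0$ on $\ov F$; in particular a function vanishing on $E$ automatically vanishes on $\ov E$. Second, for any open set $W\ss M(A)$ and any $F\ss M(A)$ one has $W\inter\ov F\ss\ov{W\inter F}$: a point $x\in\ov F$ lying in the open set $W$ has arbitrarily small neighbourhoods contained in $W$, each of which meets $F$, hence meets $W\inter F$. These two observations, together with the trivial remark that if $W$ is open and $x\in W\inter\ov F$ then $W$ is a neighbourhood of $x\in\ov F$ and so $W\inter F\not=\emp$, are all that is needed.

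For the implication that $\ov E$ being $t$-analytic forces $E$ to be $t$-analytic, I would take a relatively open nonvoid $U$ in $E$, write $U=E\inter W$ with $W$ open in $M(A)$, and suppose $f\in A$ vanishes on $U$. Then $\ov E\inter W$ is relatively open in $\ov E$ and nonvoid (it contains $U$). By the second fact, $\ov E\inter W\ss\ov{E\inter W}=\ov U$, and since $f$ is continuous and vanishes on $U$, it vanishes on $\ov U$, hence on $\ov E\inter W$. The $t$-analyticity of $\ov E$ now gives $f\equiv 0$ on $\ov E$, and a fortiori on $E$.

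For the converse, suppose $E$ is $t$-analytic and let $V=\ov E\inter W$ be relatively open and nonvoid in $\ov E$, with $f\in A$ vanishing on $V$. Pick $x\in\ov E\inter W$; since $W$ is an open neighbourhood of $x\in\ov E$, the set $E\inter W$ is nonvoid, and it is relatively open in $E$. As $E\inter W\ss\ov E\inter W=V$, $f$ vanishes on $E\inter W$, so by $t$-analyticity of $E$ we get $f\equiv 0$ on $E$, and then by continuity $f\equiv 0$ on $\ov E$. This establishes $E$ $t$-analytic $\ssi$ $\ov E$ $t$-analytic.

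There is no real obstacle here: the statement is a soft topological fact. The only point that requires a moment's care is the inclusion $W\inter\ov F\ss\ov{W\inter F}$ for $W$ open (and the companion remark that such intersections with $E$ stay nonvoid), which is what lets us pass a vanishing condition back and forth between $E$ and $\ov E$ through relatively open sets.
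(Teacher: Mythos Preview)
Your argument is correct. The paper itself does not supply a proof of this observation; it simply declares the result ``immediate from the definition of a $t$-analytic set'' and moves on. What you have written is exactly the routine verification the authors chose to omit, carried out cleanly: the two facts you isolate (continuity of Gelfand transforms giving $f|_F\equiv 0\Rightarrow f|_{\ov F}\equiv 0$, and the inclusion $W\inter\ov F\ss\ov{W\inter F}$ for $W$ open) are precisely what makes the passage between relatively open subsets of $E$ and of $\ov E$ work. There is nothing to compare strategically, since the paper offers no alternative; your proof is the natural and only reasonable one.
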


Recall that, for $E \subseteq M(A)$, $\hat E$ denotes the hull-kernel closure of $E$.
It is natural to ask whether the hull-kernel closure of a $t$-analytic set for $A$ is still a $t$-analytic set.
The following examples show that this is not necessarily the case.

\begin{figure}[h] 
\scalebox{.35} {\includegraphics{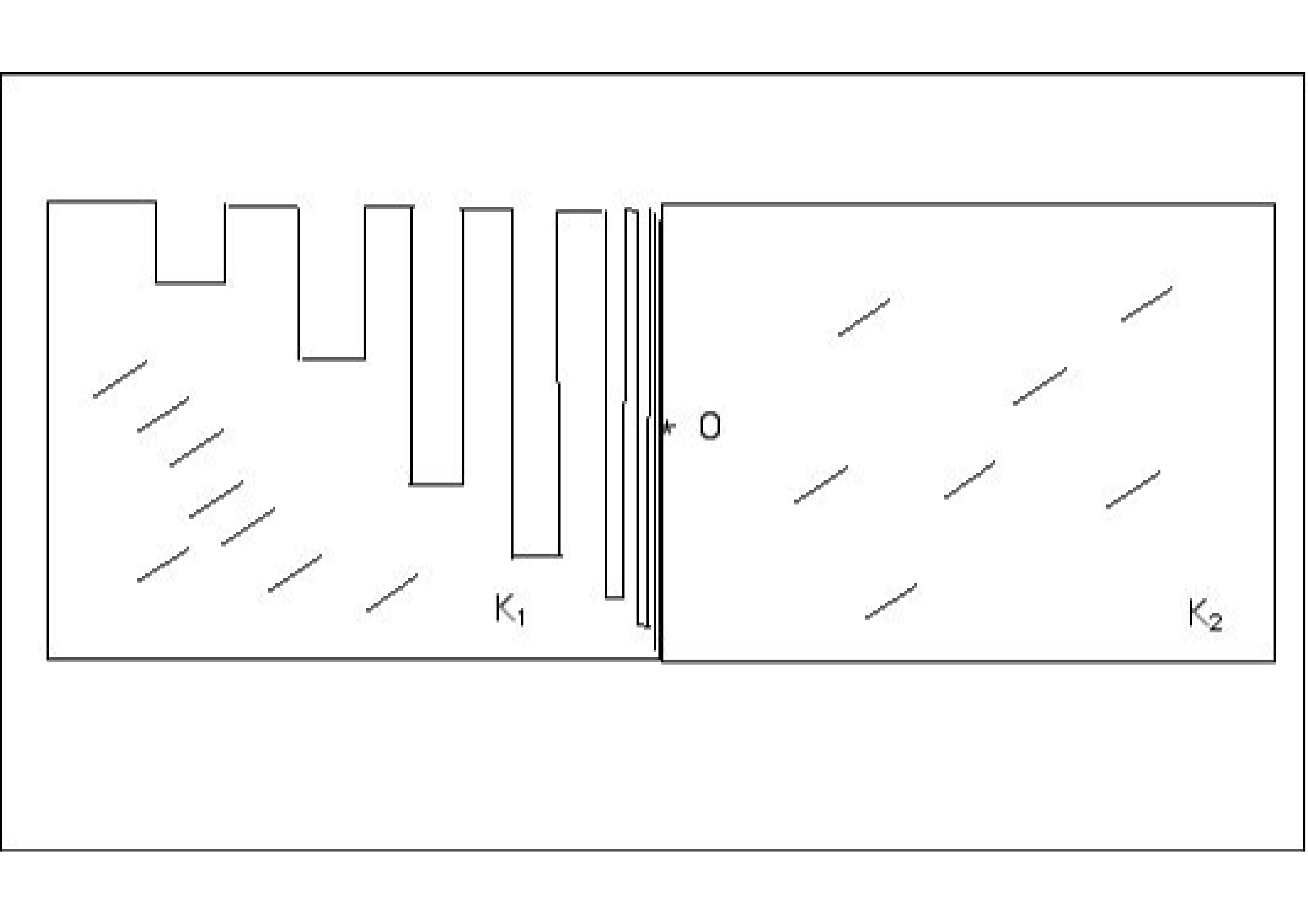}}
\caption{$\hat K_1=K_1\union K_2$}
\label{compact}
\end{figure}

\begin{example}\label{comb}
Let $K_j$ be the compact sets in figure 1 and set $K=K_1\union K_2$.\footnote{$K_1$ appeared
in a different setting in  \cite{ru}.}
It is well known \cite{pom}
that the Riemann map $f$ of $K_1^\circ $ onto the unit disk $\D$ admits a continuous extension
 to the boundary of $K_1$. This extension has constant value at the  whole boundary segment
 to the right  of $K_1$.
 Consider the function $g$ defined by
$g(z)=f(z)-f(0)$ if $z\in K_1$ and $g(z)=0$ if $z\in K_2$. Then $g\in A(K)$.
Now it is clear that $K_2$ and $K_1$ are maximal $t$-analytic sets; in particular,
$K$ is not $t$-analytic.  The hull of the ideal $I_{A(K)}(K_1)$ is $K$.
So $K=\hat K_1$ is the desired set.
\end{example}

The following alternative  example, intended  for adherents of the glueing technique,
 is unlike the first, not a point separating  algebra on its domain of definition $X$.
 Recall that $\N=\{0,1,2,\dots\}$ and $\N^*=\N\setminus\{0\}$.

\begin{example}
\label{glued}
Let $X_1$ and $X_2$ be two disjoint closed disks in the complex plane and set $X=X_1 \cup X_2$. Choose two convergent sequences of distinct points $a_n \in \partial X_1$ and $b_n \in {X_2}^\circ$ such that $b_0:=\lim_{n\to\infty} b_n \in {X_2}^\circ $.
Set $a_0 = \lim_{n\to\infty} a_n \in \partial X_1$.
Let \[
B=\{f \in A(X): f(a_n)=f(b_n) \text{ for all } n\in\N\,\}\,.
\]
Then $B$ is a closed sub-algebra  of $A(X)$  and $M(B)$ may be identified with the
quotient space $X/_\sim$ of $X$ obtained by identifying each point $a_n \in X_1$ with the corresponding point $b_n \in X_2$, $n\in\N$. This may also be thought of as glueing together the two disks $X_1$ and $X_2$ using these two sequences of points.

Let $\pi:X\to X/_\sim$  be the quotient mapping. By Fatou's interpolation theorem
\cite[p. 80]{hof} there exists
 $f\in A(X_1)$ that vanishes exactly at the points $\{a_n:n\in\N\}$.
Let $g=f$ on $X_1$ and $g=0$ on $X_2$. Then $g\in B$. Now as in example
\ref{comb}, we see that $\pi(X_j)$ are maximal $t$-analytic sets and the hull of the
ideal
$$\mbox{$I(\pi(X_1), X/_\sim):=\{h\in B: h\equiv 0$ on $X_1\}$}$$ equals $X/_\sim$.
\end{example}

It would be interesting to know for which algebras it is true that the hull-kernel closures of $t$-analytic sets are $t$-analytic.

\medskip

The following elementary result gives a useful connection between $t$-analytic sets and $k$-hulls of points.

\begin{proposition}\label{small}
Let $E\ss M(A)$ be a $t$-analytic  set for $A$, and  suppose that $x\in E$. Then $E\ss k_A(x)$.
\end{proposition}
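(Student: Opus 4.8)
The goal is to show that if $E$ is a $t$-analytic set for $A$ and $x\in E$, then $E\subseteq k_A(x)$, where $k_A(x)$ is the hull of the ideal $J_A(x)$ of functions vanishing in a neighborhood of $x$. The natural strategy is to argue directly from the definitions: take an arbitrary point $y\in E$ and show that $y\in k_A(x)$, i.e. that $f(y)=0$ for every $f\in J_A(x)$.

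\textbf{Key steps.} First I would fix $y\in E$ and $f\in J_A(x)$. By definition of $J_A(x)$, there is an open neighborhood $V$ of $x$ in $M(A)$ on which $f\equiv 0$. Then $U:=V\cap E$ is a relatively open subset of $E$ containing $x$, hence nonvoid, and $f\equiv 0$ on $U$. Now apply the defining property of $t$-analyticity of $E$: since $f\in A$ vanishes identically on the nonvoid relatively open subset $U$ of $E$, it follows that $f\equiv 0$ on all of $E$. In particular $f(y)=0$. Since $f\in J_A(x)$ was arbitrary, $y$ lies in $Z_A(J_A(x))=k_A(x)$; and since $y\in E$ was arbitrary, $E\subseteq k_A(x)$.

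\textbf{Main obstacle.} There is essentially no obstacle here — the result is a direct unwinding of the definitions, the only point worth stating carefully being that $V\cap E$ is indeed nonvoid (it contains $x\in E$) and relatively open in $E$, so that the hypothesis of $t$-analyticity genuinely applies. One might optionally remark afterwards that combining this with Proposition \ref{khull} (or the discussion of $R$-points) shows that a $t$-analytic set containing a regularity point of type II must be the singleton consisting of that point, which is presumably the intended application; but for the proof of the statement itself nothing beyond the above is needed.
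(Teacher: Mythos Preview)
Your proof is correct and is essentially the same as the paper's: the paper simply compresses your argument into the one-line observation that $J_A(x)\subseteq I_A(E)$ (which is exactly what your neighborhood-and-$t$-analyticity step establishes), and then takes hulls to get $E\subseteq Z_A(I_A(E))\subseteq Z_A(J_A(x))=k_A(x)$.
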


\begin{proof}
By the definition of a $t$-analytic set, it is immediate that $J_A(x) \subseteq I_A(E)$, and hence that
$k_A(x) = Z(J_A(x)) \supseteq Z_A(I(E)) \supseteq E$.
\end{proof}

Our next result again shows the strong opposition between $t$-analyticity and regularity.

\begin{proposition}
\label{khull2}
Let $E\ss M(A)$ be a $t$-analytic set for $A$ such that $E$ has at least two points. Then $E\inter \Re_I=\emp$ and $E\inter \Re_{II}=\emp$.
In particular, if $A$ is regular, then the only non-empty, $t$-analytic sets for $A$ are singletons.
\end{proposition}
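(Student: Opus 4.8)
The plan is to use Proposition \ref{small} together with the basic facts about regularity points recalled earlier. Suppose $E$ is $t$-analytic with at least two points, and pick $x\in E$; I want to show $x\notin\Re_I$ and $x\notin\Re_{II}$. Since $E$ has another point $y\neq x$, and by Proposition \ref{small} $E\ss k_A(x)$, we get $y\in k_A(x)\setminus\{x\}$, so $k_A(x)\neq\{x\}$ and hence $x\notin\Re_{II}$ immediately from the definition of a regularity point of type two. As $x\in E$ was arbitrary, $E\inter\Re_{II}=\emp$.

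For $\Re_I$, I would argue as follows. Fix $x\in E$ and suppose, for contradiction, that $x\in\Re_I$, i.e.\ $x$ is an $hk$-continuity point for $A$. By the characterization recalled after the definition of regularity points (from \cite{fs}), $x$ is a point of $hk$-continuity if and only if there is no point $z\in M(A)\setminus\{x\}$ with $x\in k_A(z)$. So it suffices to exhibit such a $z$. Take $z=y$, the second point of $E$. Since $E$ is $t$-analytic and $y\in E$, Proposition \ref{small} gives $E\ss k_A(y)$, hence $x\in k_A(y)$ with $y\neq x$. This contradicts the $hk$-continuity of $x$. Therefore $x\notin\Re_I$, and since $x\in E$ was arbitrary, $E\inter\Re_I=\emp$.

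The final sentence is then a corollary: if $A$ is regular, then by Proposition \ref{khull} every $x\in M(A)$ satisfies $k_A(x)=\{x\}$, so $M(A)=\Re_{II}$ (and, by regularity, $M(A)=\Re_I$ as well). If $E$ were a $t$-analytic set with at least two points, the first part would force $E\inter\Re_{II}=\emp$, contradicting $\Re_{II}=M(A)\supseteq E\neq\emp$. Hence the only non-empty $t$-analytic sets are singletons. Alternatively one can argue directly: for a $t$-analytic $E$ with $x,y\in E$, $x\neq y$, Proposition \ref{small} gives $y\in k_A(x)=\{x\}$, a contradiction.

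I do not expect any genuine obstacle here; the content is entirely packaged in Proposition \ref{small} and in the two equivalent descriptions of $hk$-continuity points and $R$-points recalled in the text. The only mild subtlety is making sure to invoke the correct direction of the $hk$-continuity characterization from \cite{fs} --- namely that $x$ is a point of $hk$-continuity precisely when no other point $y$ has $x\in k_A(y)$ --- rather than the (false in general) converse-type statement; once that is pinned down, the argument is a two-line deduction for each of $\Re_I$ and $\Re_{II}$.
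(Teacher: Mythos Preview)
Your proof is correct. The argument for $\Re_{II}$ is identical to the paper's. For $\Re_I$, however, you take a genuinely different route: you apply Proposition~\ref{small} to the \emph{other} point $y\in E$ to get $x\in k_A(y)$, and then invoke the characterization from \cite{fs} that $x$ is an $hk$-continuity point precisely when no $y\neq x$ has $x\in k_A(y)$. The paper instead argues directly from the definition of $\Re_I$ and from $t$-analyticity: assuming $x\in\Re_I$, it produces $f\in A$ with $f(x)=1$ and $\supp f$ contained in a neighborhood of $x$ missing $y$; then $f$ vanishes on a relatively open subset of $E$ containing $y$, so $t$-analyticity forces $f\equiv 0$ on $E$, contradicting $f(x)=1$. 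Your approach is pleasantly symmetric---both cases reduce to Proposition~\ref{small}---but it imports the \cite{fs} characterization as a black box; the paper's argument is self-contained and uses only the raw definitions of $\Re_I$ and $t$-analyticity.
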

\begin{proof}

 Let $m \in E$. By Proposition \ref{small}, $E\ss k(m)$. Since $E$ has at least two points,
 $k(m)\neq \{m\}$. Thus  $m\notin \Re_{II}$. Hence $E\inter \Re_{II}=\emp$.

Now let $x,y\in E$ with $x\not=y$. Suppose, for contradiction, that $x\in \Re_I$. Let $V$
be a closed neighborhood of $x$ that does not contain $y$.  Then, by definition of the set $\Re_I$,
there exists $f\in A$ with $f(x)=1$ and ${\rm supp}\, f\ss V$. In particular, $f\in J(y)$ and
so $f$ vanishes on a non-empty open set that meets $E$. Since $E$ is  $t$-analytic, we
conclude that $f$ vanishes identically on $E$.  This contradicts the choice of $f$.
 Hence $E\inter \Re_I=\emp$.\\
The final part of the result now follows from, for example, the fact that $A$ is regular if and only if every point of $M(A)$ is a regularity point of type I (or II).
\end{proof}

 In \cite[Proposition 3.5]{dmz} it was claimed (in our terminology) that no open $t$-analytic set $O$ for $A$ can contain an $hk$-continuity point for $A$.
This claim is, in fact, false. For example, if $x$ is an isolated point of some compact space $X$, then $\{x\}$ is an open $t$-analytic set for $C(X)$, but (of course) $x$ is also an $hk$-continuity point for $C(X)$. The following observation corrects this statement, and is now a special case of part of the preceding proposition.

\begin{obs}\label{obser}
Let $O$ be an open $t$-analytic set for $A$ such that $O$ is not a singleton. Then $O$ does not contain any $hk$-continuity points
for $A$.
\end{obs}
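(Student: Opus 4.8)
The plan is to deduce Observation \ref{obser} as an immediate consequence of Proposition \ref{khull2}, after handling the one degenerate case that the proposition's hypothesis excludes. Recall that Proposition \ref{khull2} asserts that a $t$-analytic set $E$ with at least two points meets neither $\Re_I$ nor $\Re_{II}$; in particular it contains no $hk$-continuity point. So if $O$ is an open $t$-analytic set that is not a singleton, the only thing to check is that $O$ cannot be \emph{finite} with more than one point in a way that would somehow reintroduce an $hk$-continuity point --- but that is not an issue, since ``not a singleton'' together with ``nonempty'' already forces $|O|\ge 2$, and then Proposition \ref{khull2} applies verbatim. If $O=\emp$ the statement is vacuous.

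Concretely, the proof I would write is: Let $O$ be an open $t$-analytic set for $A$ that is not a singleton. If $O=\emp$ there is nothing to prove, so assume $O$ has at least two points. Then by Proposition \ref{khull2} (applied with $E=O$), $O\inter \Re_I=\emp$; that is, $O$ contains no regularity point of type one. Since, as noted just after the definition of $\Re_I$ (and in \cite[Lemma 2.1]{fs}), the $hk$-continuity points for $A$ are exactly the points of $\Re_I$, it follows that $O$ contains no $hk$-continuity point for $A$. That is the whole argument.

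There is essentially no obstacle here, since the real content was already extracted in Proposition \ref{khull2}; the only thing worth being careful about is not to overstate the conclusion. The interesting methodological point --- worth a sentence in the surrounding text rather than the proof --- is \emph{why} openness of $O$ was the hypothesis in the erroneous \cite[Proposition 3.5]{dmz} but is not needed here: the argument in Proposition \ref{khull2} for $\Re_I$ already works for arbitrary $t$-analytic sets, using only that one can find $f$ with $f(x)=1$ and $\supp f$ in a small closed neighborhood missing a second point $y\in E$, so that $f$ vanishes on a relatively open subset of $E$ (a neighbourhood of $y$ in $E$) without vanishing identically on $E$. Thus the hypothesis ``$O$ open'' in the cited claim is a red herring; the genuine hypothesis that must be added is ``$O$ not a singleton,'' which is exactly what rules out the isolated-point counterexample $\{x\}\ss X$ for $C(X)$. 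I would keep the proof to the two-line deduction above and leave this discussion to the paragraph already present before the observation.
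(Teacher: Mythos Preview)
Your proposal is correct and matches the paper's approach exactly: the paper does not even give a separate proof, but simply states that the observation ``is now a special case of part of the preceding proposition'' (Proposition~\ref{khull2}). Your two-line deduction---handle $O=\emp$ vacuously, otherwise $|O|\ge 2$ and apply Proposition~\ref{khull2} with $E=O$---is precisely what the paper intends, and your surrounding commentary about why openness is a red herring agrees with the paper's discussion preceding the observation.
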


\medskip

We now investigate the collection of $t$-analytic sets in more detail.
In what follows, the reader  should well distinguish between maximal $t$-analytic sets
and the maximum $t$-analytic set; the latter being non-existent, in general (see Example
\ref{kissing_disks}).

\begin{theorem}
Every $t$-analytic set for $A$ is contained in at least one maximal $t$-analytic set
for $A$. Moreover, the maximal $t$-analytic sets for $A$ are closed.
\end{theorem}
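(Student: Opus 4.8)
The natural approach is to apply Zorn's Lemma to the collection of $t$-analytic sets containing a given $t$-analytic set $E_0$, ordered by inclusion. The content of the proof is really the verification of two facts: that the union of a chain of $t$-analytic sets is $t$-analytic, and that maximal $t$-analytic sets are automatically closed. I would establish the closedness fact first, since it will also be used inside the chain argument.

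\textbf{Step 1: closedness.} Suppose $E$ is a maximal $t$-analytic set for $A$. By the Observation stated at the start of this section, $E$ is $t$-analytic if and only if $\overline E$ is $t$-analytic. Hence $\overline E$ is a $t$-analytic set containing $E$, and maximality forces $E = \overline E$, so $E$ is closed. This same Observation shows, more generally, that in any chain of $t$-analytic sets we may freely pass to closures without leaving the class.

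\textbf{Step 2: unions of chains.} Let $\mathcal{E}$ be a chain (totally ordered by inclusion) of $t$-analytic sets, each containing the fixed set $E_0$, and let $W = \bigcup_{E \in \mathcal{E}} E$. I claim $W$ is $t$-analytic. Let $U$ be a relatively open nonempty subset of $W$, and suppose $f \in A$ vanishes on $U$. Pick $x_0 \in U$; then $x_0 \in E_1$ for some $E_1 \in \mathcal{E}$. The set $U \cap E_1$ is relatively open in $E_1$ (since $U$ is relatively open in $W \supseteq E_1$) and nonempty (it contains $x_0$), and $f$ vanishes on it; by $t$-analyticity of $E_1$, $f \equiv 0$ on $E_1$. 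Now let $E \in \mathcal{E}$ be arbitrary. Since $\mathcal{E}$ is a chain, either $E \subseteq E_1$ — in which case $f \equiv 0$ on $E$ already — or $E_1 \subseteq E$. In the latter case, $E_1$ is a subset of $E$ on which $f$ vanishes; but I need $E_1$ to contain a nonempty relatively open subset of $E$ to invoke $t$-analyticity of $E$, which is the delicate point (see below). Granting that, $f \equiv 0$ on $E$. As $E$ was arbitrary, $f \equiv 0$ on $W$, so $W$ is $t$-analytic, and it contains $E_0$. Zorn's Lemma then yields a maximal $t$-analytic set containing $E_0$; since $E_0$ was an arbitrary $t$-analytic set, this proves the first assertion.

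\textbf{Main obstacle.} The gap in Step 2 is exactly that $E_1$ need not be relatively open in a larger chain-member $E$, so one cannot directly apply the $t$-analyticity of $E$ on the open set "$E_1$". The clean fix is to replace the naive union by its closure at each stage, or better, to run Zorn's Lemma on the poset of \emph{closed} $t$-analytic sets containing $\overline{E_0}$: by Step 1 this loses nothing, and for a chain of closed $t$-analytic sets one still needs that $\overline{W}$ is $t$-analytic. Even then the same issue resurfaces. The genuinely correct argument is subtler: one shows that if $f$ vanishes on a nonempty relatively open subset of $E_1$ and $E_1 \subseteq E$ with both $t$-analytic, then $f \equiv 0$ on $E$, by noting that $Z_A(f) \cap E$ is a relatively closed subset of $E$ whose relative interior (in $E$) is nonempty — it contains the relative interior of $Z_A(f)\cap E_1$ in $E_1$ intersected appropriately — hmm, this still requires care. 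I expect the author circumvents all of this by instead phrasing maximality via the partial order and using that every $t$-analytic set is contained in $k_A(x)$ for each of its points (Proposition \ref{small}), so that a chain's union sits inside a fixed $k$-hull and one argues there; the real work is pinning down why the union of the chain is $t$-analytic, and I would present it by reducing to the two-set situation: for $E_1 \subseteq E$ with $E_1$ $t$-analytic and $E$ $t$-analytic, if $f$ vanishes on a relatively open nonempty $U \subseteq E_1$ then (applying $t$-analyticity of $E_1$) $f\equiv 0$ on $E_1$; now $Z_A(f)\cap E$ contains $E_1$, and one must exhibit a nonempty relatively $E$-open subset of $Z_A(f)\cap E$ — here one uses that $E_1$ was not an arbitrary subset but arose as a chain member, and in fact it suffices to take the original $U$ and observe directly that the chain can be re-indexed so that the relatively open witness set is chosen once and for all at the bottom of the chain. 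This bookkeeping — choosing the open witness set at the base and propagating it up the chain — is the step I would be most careful about when writing the full proof.
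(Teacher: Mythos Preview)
Your overall approach---Zorn's Lemma on the family of $t$-analytic sets containing $E_0$, then closedness from the Observation---is exactly the paper's. Step~1 is correct and matches the paper.

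In Step~2, however, you manufacture an obstacle that is not there. When $E_1\subseteq E$, you try to use $E_1$ itself as the relatively open witness inside $E$, and then worry that $E_1$ need not be relatively open in $E$. You never need this. Since $E_1\subseteq E$, you have $x_0\in E$, so $U\cap E$ is a nonempty relatively open subset of $E$ (because $U$ is relatively open in $W\supseteq E$), and $f$ vanishes on $U\cap E$ simply because $f$ vanishes on all of $U$. Apply the $t$-analyticity of $E$ directly to $U\cap E$ to conclude $f\equiv 0$ on $E$. This is precisely the paper's argument: one fixes the single open witness $U$ at the outset and uses $U\cap Y_\alpha$ for every chain member $Y_\alpha$ containing $x_0$; the intermediate $E_1$ plays no role beyond locating $x_0$ inside some chain member, and the dichotomy $E\subseteq E_1$ versus $E_1\subseteq E$ is unnecessary. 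Your ``Main obstacle'' paragraph, the detour through closed chains, and the appeal to $k$-hulls are all superfluous once you see that the same $U$ works uniformly.
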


\begin{proof}
Let $E$ be a $t$-analytic set for $A$. Consider the (non-empty) collection $\mathcal F$
of all $t$-analytic sets for $A$ which contain $E$, partially ordered by inclusion.
Let $\{Y_\alpha: \alpha\in \Lambda\}$ be a non-empty chain of sets in $\mathcal F$.
Then this chain has an upper bound in $\mathcal F$; indeed
let
$$Y = \bigcup_\alpha Y_\alpha.$$
We show that $Y$ is a $t$-analytic set for $A$.

Let $f \in A$ and suppose that $f$ vanishes identically on a non-empty,  relatively open  subset, $U$,
of $Y$.  Choose $x_0 \in U$. Then for all large enough $Y_\alpha$,
we have $x_0 \in Y_\alpha$. For these $Y_\alpha$ we see that $f$
vanishes identically on the non-empty,  relatively open  subset
$U \cap Y_\alpha$ of $Y_\alpha$, and hence $f$ is constantly $0$ on
$Y_\alpha$. It now follows that $f$ vanishes identically on $Y$, as
required.

The existence of the required maximal $t$-analytic sets now follows from Zorn's Lemma.
Since the closure of such a maximal $t$-analytic set is still a $t$-analytic set, it is immediate that
such maximal $t$-analytic sets must be closed.
\end{proof}

It is natural to ask whether
each point $x\in M(A)$ is contained in a {\it unique} maximal
$t$-analytic set. However, this is not generally the case, as is shown by Example \ref{kissing_disks}. In that example, both of the sets $X_1$ and $X_2$ are maximal $t$-analytic sets containing the point $1$.
\medskip

A description of all $t$-analytic sets appears to be difficult in general.
At present, we know such a description only for general regular algebras (Proposition
\ref{khull2}),  and the disk-algebra (Theorem \ref{total}).
Part of the problem is that unions and subsets of $t$-analytic sets need not be $t$-analytic. There is more hope of describing the maximal $t$-analytic sets: we will give some partial results of this type
 (Theorem \ref{thin}).

\begin{proposition}\label{union}
Let $E$  be a closed set in $M(A)$ and suppose that $H$ is hull-kernel closed.
Then $E\union H$ is $t$-analytic if and only if $H\ss E$ and $E$ is $t$-analytic
 or if $E\ss H$ and $H$ is  $t$-analytic.
\end{proposition}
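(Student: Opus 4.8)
The plan is to separate the two directions, observing that the implication from the right-hand conditions is immediate while the forward implication carries all the content, and that within the forward implication the decisive point is that hull-kernel closedness of $H$ is precisely the separation property one needs.

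For the easy direction: if $H\ss E$ and $E$ is $t$-analytic, then $E\union H=E$ is $t$-analytic; symmetrically, if $E\ss H$ and $H$ is $t$-analytic, then $E\union H=H$ is $t$-analytic. So there is nothing to prove there.

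For the forward direction, assume $E\union H$ is $t$-analytic. The main step is to show that $E$ and $H$ must be nested, i.e.\ $H\ss E$ or $E\ss H$. I would argue by contradiction: if neither holds, pick $h\in H\setminus E$ and $e\in E\setminus H$. Since $H$ is hull-kernel closed we have $H=Z(I_A(H))=\Inter_{f\in I_A(H)}Z(f)$, so from $e\notin H$ there is a function $f\in I_A(H)$ — that is, $f\equiv 0$ on $H$ — with $f(e)\neq 0$. Now use that $E$ is closed: the set $(M(A)\setminus E)\inter(E\union H)=H\setminus E$ is a nonempty (it contains $h$) relatively open subset of $E\union H$ on which $f$ vanishes identically. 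The $t$-analyticity of $E\union H$ then forces $f\equiv 0$ on all of $E\union H$, in particular $f(e)=0$, contradicting $f(e)\neq 0$. Hence $H\ss E$ or $E\ss H$. In the first case $E\union H=E$ is $t$-analytic; in the second $E\union H=H$ is $t$-analytic; this gives exactly the stated alternative.

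The step that requires the key idea is the nestedness argument: recognizing that ``$H$ is hull-kernel closed'' is exactly the hypothesis that lets one separate any point outside $H$ from $H$ by a single function of $A$ vanishing on all of $H$, and then exploiting the closedness of $E$ by taking the Gelfand-open set $M(A)\setminus E$ to exhibit $H\setminus E$ as a nonempty relatively open piece of $E\union H$. Everything else is bookkeeping, so I do not anticipate any further obstacle.
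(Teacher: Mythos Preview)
Your proof is correct and follows essentially the same approach as the paper's: both use hull-kernel closedness of $H$ to find $f\in A$ vanishing on $H$ with $f\neq 0$ at a chosen point of $E\setminus H$, and closedness of $E$ to exhibit $H\setminus E$ as a nonempty relatively open subset of $E\cup H$, yielding a contradiction to $t$-analyticity. The only difference is cosmetic---the paper phrases the forward direction as a contrapositive rather than a proof by contradiction.
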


\begin{proof}
The \lq\lq if''direction is clear. Now suppose that $E\setminus H\not=\emp$ and
$H\setminus E\not=\emp$. Let $y\in E\setminus H$. There exists a function $f\in A$
such that $f\equiv 0$ on $H$, and $f(y)\not=0$.  But $f$ vanishes on the non-empty,
relatively open
set $(E\union H)\setminus  E=H\setminus E $ in $E\union H$.
Hence $E\union H$ cannot be $t$-analytic.
\end{proof}

\begin{obs}\label{prelim}
Suppose that $E_j$ are closed sets in $M(A)$ and that $E_1\union E_2$ is $t$-analytic.
If $E_1\not\ss E_2$, then $E_2\ss\hat E_1$.
\end{obs}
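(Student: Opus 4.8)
The plan is to argue by contraposition, exactly in the spirit of the proof of Proposition \ref{union}. Suppose that $E_1 \not\subseteq E_2$ but that $E_2 \not\subseteq \hat E_1$; I must derive that $E_1 \cup E_2$ fails to be $t$-analytic. Since $E_2 \not\subseteq \hat E_1 = Z_A(I_A(E_1))$, there is a point $y \in E_2$ and a function $f \in I_A(E_1)$ with $f(y) \neq 0$; that is, $f \equiv 0$ on $E_1$ but $f(y) \neq 0$. The first thing to check is that $f$ does indeed vanish on a non-empty set that is relatively open in $E_1 \cup E_2$.

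The natural candidate for this relatively open set is $(E_1 \cup E_2) \setminus E_2$, which equals $E_1 \setminus E_2$. Because $E_2$ is closed (by hypothesis), $E_1 \setminus E_2$ is open in $E_1 \cup E_2$, and it is non-empty precisely because $E_1 \not\subseteq E_2$. On this set $f$ vanishes, since $f \equiv 0$ on all of $E_1$. Hence $f \in A$ vanishes on a non-empty relatively open subset of $E_1 \cup E_2$ but does not vanish identically on $E_1 \cup E_2$ (as $f(y) \neq 0$ for the point $y \in E_2$), so $E_1 \cup E_2$ is not $t$-analytic, giving the contradiction.

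I expect the only subtle point — and really the only place one has to be careful — is the bookkeeping about which set plays the role of "the relatively open set on which $f$ vanishes": one wants $E_1 \setminus E_2$ rather than $E_2 \setminus E_1$, and one needs the closedness of $E_2$ (not $E_1$) to guarantee openness in the subspace $E_1 \cup E_2$. The existence of the separating function $f$ is immediate from the definition of $\hat E_1 = Z_A(I_A(E_1))$: saying $y \notin \hat E_1$ is by definition saying there is $f \in I_A(E_1)$ with $f(y) \neq 0$. So there is essentially no real obstacle here; the observation is a direct refinement of Proposition \ref{union}, obtained by weakening its two-sided symmetric hypothesis ($E \setminus H \neq \emptyset$ and $H \setminus E \neq \emptyset$) to the one-sided statement, and this is reflected in the one-sided conclusion $E_2 \subseteq \hat E_1$ in place of the earlier dichotomy.
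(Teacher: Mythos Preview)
Your proof is correct and is essentially the contrapositive of the paper's own argument: the paper takes an arbitrary $f\in I_A(E_1)$, observes that $f$ vanishes on the non-empty relatively open set $U=E_1\setminus E_2$, and uses $t$-analyticity directly to conclude $f\equiv 0$ on $E_1\cup E_2$, hence $E_1\cup E_2\ss\hat E_1$. The key step in both---identifying $E_1\setminus E_2$ as the non-empty relatively open subset of $E_1\cup E_2$ on which $f$ vanishes---is identical.
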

\begin{proof}
We show that $E_1\union E_2\ss \hat E_1$.
Let $f\in A$ vanish identically on $E_1$. Since
$$U:=(E_1\union E_2)\setminus E_2=E_1\setminus E_2\not=\emp,$$
$f$ vanishes on the non-empty,  relatively open subset $U$ of $E_1\union E_2$. Hence, the $t$-analyticity
implies that $f$ vanishes identically on $E_1\union E_2$. Thus $E_1\union E_2\ss \hat E_1$,
by definition of $\hat E_1$.
\end{proof}

\begin{corollary}
Suppose that $E_j$ are closed sets in $M(A)$ and that $E:=E_1\union E_2$ is $t$-analytic.
If neither $E_1$ is contained in $E_2$ nor $E_2$ contained in $E_1$, then
$\hat E=\hat E_1=\hat E_2$.
\end{corollary}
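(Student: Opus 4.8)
The plan is to leverage Observation \ref{prelim} symmetrically. Since $E = E_1 \cup E_2$ is $t$-analytic and $E_1 \not\subseteq E_2$, Observation \ref{prelim} gives $E_2 \subseteq \hat E_1$, and in fact the proof of that observation shows the stronger statement $E = E_1 \cup E_2 \subseteq \hat E_1$. By symmetry, using instead the hypothesis $E_2 \not\subseteq E_1$, we also obtain $E = E_1 \cup E_2 \subseteq \hat E_2$. Thus we have the two inclusions $E \subseteq \hat E_1$ and $E \subseteq \hat E_2$.

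Next I would close these up under hull-kernel closure. The operation $F \mapsto \hat F$ is a closure operation (it is the closure in the hull-kernel topology), so it is monotone and idempotent. Applying it to $E \subseteq \hat E_1$ yields $\hat E \subseteq \widehat{\hat E_1} = \hat E_1$. Conversely, $E_1 \subseteq E$ trivially, so $\hat E_1 \subseteq \hat E$. Hence $\hat E = \hat E_1$, and by the same reasoning $\hat E = \hat E_2$. Combining gives $\hat E = \hat E_1 = \hat E_2$, as claimed.

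There is essentially no obstacle here: the corollary is a direct formal consequence of Observation \ref{prelim} (applied twice, once to each of the two non-inclusion hypotheses) together with the elementary fact that hull-kernel closure is a monotone, idempotent operation on subsets of $M(A)$. The only point worth stating explicitly in the write-up is that the proof of Observation \ref{prelim} actually establishes $E_1 \cup E_2 \subseteq \hat E_1$ (not merely $E_2 \subseteq \hat E_1$), so that one can immediately take hull-kernel closures and use idempotency; alternatively one notes $\hat E_1 \subseteq \widehat{E_1 \cup E_2} = \hat E$ is automatic from $E_1 \subseteq E$, so only the reverse inclusion requires Observation \ref{prelim}.

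\begin{proof}
Since $E_1\not\ss E_2$, the proof of Observation \ref{prelim} shows that $E=E_1\union E_2\ss\hat E_1$. Taking hull-kernel closures and using that $F\mapsto\hat F$ is monotone and idempotent, we get $\hat E\ss\widehat{\hat E_1}=\hat E_1$. On the other hand, $E_1\ss E$ gives $\hat E_1\ss\hat E$. Hence $\hat E=\hat E_1$. By symmetry, using instead that $E_2\not\ss E_1$, we obtain $\hat E=\hat E_2$. Therefore $\hat E=\hat E_1=\hat E_2$.
\end{proof}
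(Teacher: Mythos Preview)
Your argument is correct and follows essentially the same route as the paper's. The only cosmetic difference is that the paper cites just the \emph{statement} of Observation~\ref{prelim} (obtaining $E_2\ss\hat E_1$ and $E_1\ss\hat E_2$) and then finishes using that the hull-kernel closure of a finite union is the union of the hull-kernel closures, whereas you pull the stronger inclusion $E_1\cup E_2\ss\hat E_1$ out of the \emph{proof} of that observation and finish with monotonicity and idempotency of $F\mapsto\hat F$.
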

\begin{proof}
By the observation \ref{prelim} above, $E_2\ss \hat E_1$ and $E_1\ss \hat E_2$.
Since the hull-kernel closure of  a finite  union is the union of the hull-kernel closures,
 we see that $\hat E=\hat E_1=\hat E_2$.
\end{proof}

Using Proposition \ref{union}, we can give  certain necessary conditions for a closed set to
be $t$-analytic  in a Banach function algebra.

\begin{definition}
Let $E$ be a closed subset of  a compact Hausdorff space. A non-void, proper subset
 $S$ of $E$ is said to be {\it isolated} \footnote{~~or relatively clopen}
 (within $E$),  if there exist two disjoint open sets $U$ and $V$ such that
$$\mbox{$S\ss U$ and $E\setminus S\ss V$};$$
that is if $S$ can be separated from its complement within $E$.
\end{definition}

\begin{corollary}\label{cara}
Let $E\ss M(A)$ be a closed  $t$-analytic set for $A$.
Then either  $E$ is  a singleton or  $E$ does not contain
 any isolated hull-kernel closed subsets.
\end{corollary}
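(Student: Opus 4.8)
\textbf{Proof proposal for Corollary \ref{cara}.}

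The plan is to prove the contrapositive-style dichotomy: assuming $E$ is a closed $t$-analytic set for $A$ that is \emph{not} a singleton, I want to show $E$ contains no isolated hull-kernel closed subset. So suppose, for contradiction, that $S \subsetneq E$ is a non-void proper subset of $E$ which is both isolated within $E$ (in the Gelfand topology) and hull-kernel closed. The key observation is that $S$ being isolated within $E$ means there are disjoint open sets $U, V$ in $M(A)$ with $S \subseteq U$ and $E \setminus S \subseteq V$; in particular, $S$ and $E \setminus S$ have disjoint closures, so $S$ is \emph{relatively clopen} in $E$, and likewise $\overline{E \setminus S}$ is a closed set disjoint from $S$. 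Write $T := \overline{E \setminus S}$, which is closed and satisfies $E = S \cup T$ with $S \cap T = \emptyset$ (using that $S$ is relatively clopen in the closed set $E$, so both $S$ and $E\setminus S$ are already closed).

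Now I would invoke Proposition \ref{union} with the closed set $T$ playing the role of $E$ there and the hull-kernel closed set $S$ playing the role of $H$: since $E = T \cup S$ is $t$-analytic, Proposition \ref{union} forces either $S \subseteq T$ (impossible, since $S$ and $T$ are disjoint and $S \neq \emptyset$) or $T \subseteq S$ (also impossible, since $T \supseteq E \setminus S \neq \emptyset$ and $T \cap S = \emptyset$, unless $E \setminus S = \emptyset$, contradicting that $S$ is a proper subset). Wait — I should be slightly careful: Proposition \ref{union} as stated requires $E = T\cup S$ with $T$ closed and $S$ hull-kernel closed, and concludes $S \subseteq T$ with $T$ $t$-analytic, or $T \subseteq S$ with $S$ $t$-analytic. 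Both alternatives contradict the disjointness and non-voidness of $S$ and $E\setminus S$, giving the desired contradiction. Hence no such $S$ exists, completing the proof.

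The main obstacle I anticipate is the topological bookkeeping around the word \emph{isolated}: one must check that an isolated subset $S$ of the closed set $E$ is itself closed (so that it can serve directly as the closed/hull-kernel-closed set in Proposition \ref{union}), and that $E \setminus S$ is closed as well. This is where the two disjoint open sets $U, V$ are used: $S = E \cap U$ is relatively open in $E$, and $E \setminus S = E \cap V$ is also relatively open in $E$, so each is relatively clopen, hence closed in $M(A)$ since $E$ is closed. After that, applying Proposition \ref{union} is mechanical. One subtlety worth a remark: the hypothesis that $S$ is hull-kernel closed is exactly what lets us place $S$ in the "$H$" slot of Proposition \ref{union}; without it the proposition does not apply, and indeed the conclusion can fail, as Example \ref{kissing_disks} shows (there $X_1$ is a closed $t$-analytic set, $\{1\}$ is relatively clopen in... no — $\{1\}$ is not isolated within $X_1$, consistent with the corollary).
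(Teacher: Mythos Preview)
Your proposal is correct and follows essentially the same approach as the paper, whose proof reads in its entirety: ``This follows immediately from Proposition \ref{union}.'' You have simply unpacked the details the paper leaves implicit --- verifying that an isolated subset $S$ of the closed set $E$ and its complement $E\setminus S$ are both closed in $M(A)$, and then feeding the decomposition $E=(E\setminus S)\cup S$ into Proposition \ref{union} to derive a contradiction.
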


\begin{proof}
This follows immediately from Proposition \ref{union}.
\end{proof}

The following result  is a  slight generalization  of the previous corollary \ref{cara}.
\begin{proposition}\label{big-small}
Let $E\ss M(A)$ be a closed $t$-analytic set for $A$. Then for every hull-kernel closed set $H$
in $M(A)$ either $E\setminus H=\emp$ or $E\setminus H$ is dense in $E$.
\end{proposition}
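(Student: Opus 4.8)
The plan is to obtain this as a direct consequence of the definition of $t$-analyticity, in the same spirit as Proposition \ref{small} and Corollary \ref{cara}. First I would dispose of the trivial case $E\setminus H=\emp$ and assume $E\setminus H\neq\emp$, fixing a point $y\in E\setminus H$. The key observation is that, because $H$ is hull-kernel closed, $H=\hat H=Z_A(I_A(H))$; since $y\notin H$, there is therefore a function $f\in I_A(H)$ with $f(y)\neq 0$, i.e.\ a function that vanishes identically on all of $H$ but not at the point $y\in E$.

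Next I would argue by contradiction. If $E\setminus H$ were not dense in $E$, there would be a non-empty, relatively open subset $U$ of $E$ with $U\inter(E\setminus H)=\emp$, that is, with $U\ss H$. Then $f\equiv 0$ on $U$, and since $U$ is a non-empty, relatively open subset of the $t$-analytic set $E$, the $t$-analyticity of $E$ would force $f\equiv 0$ on all of $E$, contradicting $f(y)\neq 0$. Hence $E\setminus H$ is dense in $E$, as claimed.

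I do not expect any genuine obstacle here: the proof is just an unwinding of the definitions, and the only point worth stating carefully is the identity $H=Z_A(I_A(H))$ that characterises hull-kernel closed sets. As a sanity check, this proposition recovers Corollary \ref{cara}: applying it with $H=S$ shows that a proper, isolated, hull-kernel closed subset $S$ of $E$ would make $E\setminus S$ non-empty, relatively open, and \emph{not} dense in $E$ (it is disjoint from the non-empty relatively open set $S$), which is possible only when $E$ is a singleton.
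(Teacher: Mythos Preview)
Your proof is correct and is essentially the same argument as the paper's: both fix $y\in E\setminus H$, use hull-kernel closedness of $H$ to find $f\in A$ with $f\equiv 0$ on $H$ but $f(y)\neq 0$, and then apply $t$-analyticity to a non-empty relatively open subset of $E$ contained in $H$ (the paper takes this open set to be $E\setminus\overline{E\setminus H}$, while you take any relatively open $U\subseteq E$ disjoint from $E\setminus H$, which amounts to the same thing).
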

\begin{proof}
Suppose, for contradiction,  that  $\emp\not= \ov{E\setminus H} \subsetneqq E$. Then there
is $y\in E\setminus H$ and $f\in A$ such that $f\equiv 0$ on $H$, but $f(y)\not=0$.
In particular $f\equiv 0$ on the nonvoid (relatively) open set
$E\setminus \bigl(\ov{E\setminus H}\bigr)\ss E$.
Since $E$ was assumed to be $t$-analytic, we obtain the contradiction that $f(y)=0$.
\end{proof}

\subsection{$t$-analytic sets in $A(\ov\D)$.}

If $A$ is the disk-algebra, then we obtain a complete  characterization of the closed $t$-analytic sets.
Normalized Lebesgue measure on $\T$ will be denoted  by $\sigma$.

 \begin{theorem}\label{total}
 Let $E$ be closed subset in $\ov\D$. Suppose that $E$ has at least two points. Then $E$
  is $t$-analytic for $A(\ov\D)$ if and only if $E$ does not contain
 any isolated points  and  has the property that for every  open arc $I\ss\T$  that  does not meet
 $\ov{E\inter\D}$,
  one has that either $I\inter E=\emp$ or $\sigma(I\inter E)>0$.
 \end{theorem}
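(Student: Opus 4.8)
The plan is to establish the two implications separately, relying on two classical facts about the disk-algebra. First, if $f\in A(\ov\D)$ is not identically zero, then the zeros of $f$ in $\D$ are isolated and the boundary zero set $Z(f)\inter\T$ is a closed, $\sigma$-null set (since $\log|f|\in L^1(\T)$ for $0\not\equiv f\in\H$). Second, conversely, every closed set $F\ss\T$ with $\sigma(F)=0$ is the zero set of some $f\in A(\ov\D)$ with $f\not\equiv0$; this is Fatou's theorem (see \cite[p.~80]{hof}). I will also use repeatedly the following elementary observation: if $I\ss\T$ is an open arc with $I\inter\ov{E\inter\D}=\emp$, then $I\inter E$ is relatively open in $E$, because the set $W_I:=\ov\D\setminus\bigl(\ov{E\inter\D}\union(\T\setminus I)\bigr)$ is open in $\ov\D$, contains $I$, and satisfies $W_I\inter E=I\inter E$ (any $z\in W_I\inter E$ with $z\in\D$ would lie in $\ov{E\inter\D}$, so $z\in\T$, and then $z\notin\T\setminus I$ forces $z\in I$).

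\emph{The ``only if'' direction.} Suppose $E$ is $t$-analytic with $|E|\geq2$. That $E$ has no isolated points follows from Corollary~\ref{cara}: an isolated point $x$ of $E$ gives a proper, non-void, relatively clopen --- hence isolated --- subset $\{x\}$ of $E$, and $\{x\}$ is a zero set in $A(\ov\D)$ (of $z-x$ if $x\in\D$, and of a Fatou function vanishing exactly at $x$ if $x\in\T$), hence hull-kernel closed, so Corollary~\ref{cara} would force $E=\{x\}$. For the arc condition I argue by contradiction: suppose some open arc $I$ satisfies $I\inter\ov{E\inter\D}=\emp$, $I\inter E\not=\emp$ and $\sigma(I\inter E)=0$. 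By the observation $I\inter E=W_I\inter E$; if $I\inter E$ were a single point, that point would be isolated in $E$, which is impossible, so $|I\inter E|\geq2$. Pick distinct $x_0,y\in I\inter E$ and a smaller open arc $I_0$ with $x_0\in I_0$, $\ov{I_0}\ss I$ and $y\notin\ov{I_0}$. Then $U:=I_0\inter E$ is a non-void relatively open subset of $E$ (the observation applies to $I_0$, since $I_0\ss I$), and $F:=\ov{I_0}\inter E$ is a closed subset of $\T$ with $\sigma(F)\le\sigma(I\inter E)=0$. By Fatou there is $f\in A(\ov\D)$, $f\not\equiv0$, with $Z(f)=F$; then $f\equiv0$ on $U\ss F$ while $f(y)\not=0$ since $y\notin\ov{I_0}$. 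This contradicts the $t$-analyticity of $E$, so the arc condition must hold.

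\emph{The ``if'' direction.} Assume $E$ is closed, $|E|\geq2$, has no isolated points, and satisfies the arc condition. Let $f\in A(\ov\D)$ vanish identically on a non-void relatively open set $U=W\inter E$ (with $W$ open in $\ov\D$), and suppose towards a contradiction that $f\not\equiv0$ on $E$; then $f\not\equiv0$ on $\ov\D$, so the first classical fact applies. First, $U\inter\D=\emp$: a point $z\in U\inter\D$ would be an isolated zero of $f$, hence would have a neighbourhood $N\ss\D$ with $N\inter Z(f)=\{z\}$, so the open set $N\inter W$ would meet $E$ only in $\{z\}$, making $z$ an isolated point of $E$ --- impossible. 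Thus $U\ss\T$. Next fix $x_0\in U$; then $x_0\notin\ov{E\inter\D}$, for otherwise points of $E\inter\D$ arbitrarily close to $x_0$ would lie in the neighbourhood $W$ of $x_0$, hence in $W\inter E=U$, contradicting $U\ss\T$. Hence $x_0$ lies in the open (in $\T$) set $(W\setminus\ov{E\inter\D})\inter\T$, which therefore contains an open arc $I$ with $x_0\in I$, $I\ss W$ and $I\inter\ov{E\inter\D}=\emp$. The arc condition now yields $\sigma(I\inter E)>0$ (as $x_0\in I\inter E$), whereas $I\inter E\ss W\inter E=U\ss Z(f)\inter\T$ is $\sigma$-null --- a contradiction. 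Therefore $f\equiv0$ on $E$, which shows $E$ is $t$-analytic.

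The step I expect to require the most care is the construction in the ``only if'' direction: the function produced must vanish on a set that is genuinely \emph{relatively open} in $E$ yet be non-zero somewhere on $E$. Both requirements are met by first recording (via the explicit open set $W_I$) that $I\inter E$ is relatively open in $E$, and then shrinking $I$ to a sub-arc $I_0$ excluding a second point $y$ of $E$, so that the Fatou function associated with the null set $F=\ov{I_0}\inter E$ does the job. The remaining arguments are routine combinations of the identity principle in $\D$, the absence of isolated points of $E$, and the vanishing of Lebesgue measure on boundary zero sets of non-zero functions.
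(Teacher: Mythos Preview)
Your proof is correct and follows essentially the same route as the paper's: both directions rest on the identity principle in $\D$, Fatou's theorem on boundary zero sets, and the fact that the boundary zeros of a non-trivial $A(\ov\D)$-function have $\sigma$-measure zero. Your argument is somewhat more explicit---you construct the separating Fatou function and the auxiliary open set $W_I$ by hand, whereas the paper compresses the ``only if'' direction into an appeal to Corollary~\ref{cara} and the known description of hull-kernel closed sets---but the underlying strategy is the same.
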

 \begin{proof}
 The necessity (via contraposition) of the condition follows from Corollary \ref{cara} by using the well known facts ( see \cite{hof})
 that a closed set $S$  in $\ov\D$ is hull-kernel closed if and only if $S$ is the zero set
 of a function in $A(\ov\D)$  and that these zero sets are either  unions of Blaschke sequences with
 closed subsets of $\T$ of Lebesgue measure zero or the entire closed disk $\ov\D$.

 To prove the converse, we first note that if $E\inter \D\not=\emp$, then, by our hypotheses,
  any open set $U$  that meets $E$ within $\D$ has the property that  $U\inter E$ is uncountable.
  Thus, whenever
 $f$ vanishes on the nonvoid set  $U\inter E\inter \D$, $f$ vanishes everywhere on $\ov \D$.

 Now if $U$ is an open set in $M(A(\ov\D))$ that meets $E\inter \T$, then $U\inter \T$ is a union
 of pairwise disjoint open arcs $I_j$. Let $f\equiv 0$ on $U\inter E$.
  Suppose that  $I_j\inter E\not=\emp$ and that
$f\equiv 0$ on $I_j\inter E$. If $I_j\inter E\ss \ov{E\inter\D}$, then $U\inter E\inter \D\not=\emp$,
and so, by the previous paragraph, $f\equiv 0$ on $\ov \D$. If $I_j\inter E$ is not entirely
contained in  $\ov{E\inter\D}$, then there exists a subarc $J\ss I_j$ such that
$J\inter \ov{E\inter\D}=\emp$ and $J\inter E\not=\emp$. In that case, the hypotheses
implies that $\sigma(J\inter E)>0$. But $f\equiv 0$ on $J\inter E$. Hence, in this case, too, $f\equiv 0$
on $\ov\D$.

To sum up, we have shown that if $f\equiv 0$ on $U\inter E$, then $f$ is the zero function whenever
$U\inter E\not=\emp$.  Hence, $E$ is $t$-analytic.
   \end{proof}

   We deduce the following corollaries.

 \begin{corollary}
 The only $t$-analytic hull-kernel closed sets for $A(\ov \D)$ are the empty set, singletons and
 the whole spectrum.
 \end{corollary}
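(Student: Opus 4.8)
The plan is to combine the structure theorem for hull-kernel closed sets in $\ov\D$ with the characterization in Theorem \ref{total}. Recall that a closed set $S\ss\ov\D$ is hull-kernel closed (equivalently, a zero set of some $f\in A(\ov\D)$) precisely when $S=\ov\D$ or $S$ is the union of a Blaschke sequence in $\D$ with a closed subset of $\T$ of $\sigma$-measure zero. So let $S$ be a $t$-analytic hull-kernel closed set; if $S$ is empty, a singleton or all of $\ov\D$ we are done, so assume $S$ is hull-kernel closed, $t$-analytic, has at least two points, and $S\neq\ov\D$. I want to derive a contradiction.

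First I would use the structure above to write $S=B\union F$ with $B\ss\D$ a (possibly empty, possibly finite) Blaschke sequence and $F\ss\T$ closed with $\sigma(F)=0$. Since $S$ is $t$-analytic with at least two points, Theorem \ref{total} tells us $S$ has no isolated points; in particular $B$ can contain no point of $\D$ (every point of a Blaschke sequence sitting in the open disk and not accumulating there is isolated in $\ov\D$, and points of $B$ accumulating only on $\T$ are still isolated within $S$ as a subset of $\D$). More carefully: any point of $B\inter\D$ is isolated in $\ov\D$ hence isolated in $S$, contradicting the no-isolated-points condition; so $B=\emp$ and $S=F\ss\T$ with $\sigma(F)=0$. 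Now $\ov{S\inter\D}=\emp$, so \emph{every} open arc $I\ss\T$ fails to meet $\ov{S\inter\D}$, and the second condition of Theorem \ref{total} forces: for every open arc $I$, either $I\inter S=\emp$ or $\sigma(I\inter S)>0$. But $S\ss\T$ has at least two points and $\sigma(S)=0$; pick an open arc $I$ meeting $S$ with $\sigma(I\inter S)\le\sigma(S)=0$, so $I\inter S\neq\emp$ yet $\sigma(I\inter S)=0$ — a contradiction.

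The only mild subtlety — and the step I expect to need the most care — is the reduction $B=\emp$: one must be sure that a point $z_0\in B$ lying in the open disk is genuinely isolated \emph{within $S$}, i.e. that $S$ does not also contain boundary points or other Blaschke points accumulating at $z_0$. This is clear because $B$ is a Blaschke sequence (its only accumulation points lie on $\T$, by a Blaschke-condition/normal-families argument) and $F\ss\T$, so a neighborhood of $z_0$ in $\D$ small enough meets $S$ only in $\{z_0\}$; hence $\{z_0\}$ is relatively clopen in $S$, contradicting Theorem \ref{total} (a $t$-analytic set with $\ge 2$ points has no isolated points). Everything else is a direct application of the two theorems already proved, so the corollary follows.
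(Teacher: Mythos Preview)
Your argument is correct and is precisely the deduction the paper leaves implicit: combine the description of hull-kernel closed sets for $A(\ov\D)$ (Blaschke sequence in $\D$ union a closed measure-zero subset of $\T$, or all of $\ov\D$) with the two conditions of Theorem~\ref{total}. One small slip: the phrase ``any point of $B\cap\D$ is isolated in $\ov\D$'' is not literally true (no point of the open disk is isolated in the space $\ov\D$); what you mean, and what you correctly write a few lines later, is that such a point is isolated \emph{in $S$}, because Blaschke sequences accumulate only on $\T$ and $F\subseteq\T$.
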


  \begin{corollary}
  Every connected closed set in the spectrum of $A(\ov \D)$ is $t$-analytic.
 \end{corollary}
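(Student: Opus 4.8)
The plan is to deduce this from the characterization of closed $t$-analytic sets in Theorem~\ref{total}, recalling that the spectrum of $A(\ov\D)$ is $\ov\D$. If $E$ is empty or a singleton it is $t$-analytic trivially, so assume $E$ is a closed connected set with at least two points. Such an $E$ has no isolated points (an isolated point $x$ would make $\{x\}$ a proper, non-empty, relatively clopen subset of $E$), so the first condition of Theorem~\ref{total} holds automatically. It remains to verify the arc condition: for every open arc $I\ss\T$ with $I\inter\ov{E\inter\D}=\emp$ and $W:=I\inter E\neq\emp$, we must show $\sigma(W)>0$.

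Two preliminary observations are routine. First, $W$ is \emph{relatively open} in $E$: given $x\in W$, we have $x\in I$ and $x\notin\ov{E\inter\D}$, so a sufficiently small ball $B(x,r)$ both misses the closed set $\ov{E\inter\D}$ (hence misses $E\inter\D$) and meets $\T$ inside $I$; then $B(x,r)\inter E\ss(B(x,r)\inter\T)\inter E\ss I\inter E=W$. Being a non-empty relatively open subset of a space without isolated points, $W$ itself has no isolated points and is therefore infinite. Second, since $W\ss\T$ its closure $\ov W$ (in $\ov\D$) is contained in $\T$, in fact in $\ov I$, and every point of $\ov W\setminus W$ is an endpoint of $I$ (a point of $\ov W\inter I$ would lie in $E\inter I=W$); hence $S:=\ov W\setminus W$ has at most two elements.

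The crux is to rule out $\sigma(W)=0$. Suppose it were zero. If $S=\emp$, then $\ov W=W$ is both relatively closed in $E$ (being closed in $\ov\D$) and relatively open, so $W=E$ by connectedness; then $E$ is a non-degenerate closed arc, of positive measure --- contradicting $\sigma(W)=0$. So $S\neq\emp$. Since $S$ is finite, $\sigma(\ov W)=\sigma(W)=0$, so $\ov W$ is a compact, measure-zero subset of $\T$; as a connected subset of $\T$ of zero length is a single point, $\ov W$ is totally disconnected. Picking a point $w\in W$ and using that the compact metric space $\ov W$ has a base of clopen sets, we obtain a clopen subset $B$ of $\ov W$ containing $w$ and disjoint from the finite set $S$; thus $\emp\neq B\subsetneq\ov W$ and $B\ss W$. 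Now $B$ is relatively closed in $E$ (being closed in $\ov\D$) and also relatively open in $E$: writing $B=\ov W\inter O$ with $O$ open in $\ov\D$, the inclusion $B\ss W$ gives $B=W\inter O$, which is relatively open in $W$, hence in $E$. So $B$ is a proper, non-empty, relatively clopen subset of $E$, contradicting connectedness. Therefore $\sigma(W)>0$, and Theorem~\ref{total} applies.

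The step I expect to be the main obstacle is this last one. The heart of the matter is that a hypothetical relatively open portion $W\ss\T$ of $E$ of measure zero would make $\ov W$ a Cantor-type (totally disconnected) set; since $W$ differs from $\ov W$ by only finitely many points, a clopen piece of $\ov W$ transfers to a clopen piece of $E$, which is impossible for a connected $E$. The remaining ingredients --- relative openness of $W$, the fact that $\ov W\setminus W$ is finite, and the classification of closed connected subsets of $\T$ --- are elementary.
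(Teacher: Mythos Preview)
Your proof is correct. Both you and the paper reduce to Theorem~\ref{total} and dispose of the ``no isolated points'' condition immediately; the difference lies in how the arc condition is verified. The paper simply asserts that connectedness of $E$ forces $I\cap E$ to contain a closed sub-arc (hence to have positive measure): implicitly, if it did not, one could find points of $I\setminus E$ on either side of a chosen $x\in I\cap E$ (using that in a small neighbourhood of $x$ the set $E$ lies entirely in $I$), and the portion of $E$ trapped between those two points would be a proper non-empty relatively clopen piece of $E$. Your route reaches the same disconnection contradiction but via the additional observation that a measure-zero compact subset of $\T$ is totally disconnected, which then supplies the clopen piece avoiding the at-most-two endpoints of $\ov W\setminus W$. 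The paper's argument is shorter and gives the slightly stronger conclusion that $I\cap E$ actually contains an arc; yours is more explicit about each step and makes transparent why the finitely many boundary points cause no trouble.
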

 \begin{proof}
 If $E\ss\ov\D$ is a singleton, then $E$ is $t$-analytic. So suppose that $E$ contains
 at least two points. Since $E$ is connected, $E\inter \D$ is either empty, or contains no
 isolated point.  Now if $I\ss\T$ is an open arc with $I\inter \ov{E\inter \D}=\emp$, then
 either $I\inter E=\emp$ or the connectedness of $E$ implies that $I\inter E$ contains
 a closed arc. Hence $\sigma(I\inter E)>0$. By Theorem \ref{total}, $E$ is $t$-analytic
 for $A(\ov\D)$.
 \end{proof}

  \begin{corollary}
  Every closed set  in the spectrum of $A(\ov \D)$ is componentwise  $t$-analytic.
 \end{corollary}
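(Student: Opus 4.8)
The plan is to reduce the statement at once to the preceding corollary, which asserts that every connected closed subset of $\ov\D$ is $t$-analytic for $A(\ov\D)$. Recall first that, by Arens' theorem, the spectrum of $A(\ov\D)$ is $\ov\D$ itself, so "a closed set in the spectrum" simply means a closed subset $E\ss\ov\D$, and "componentwise $t$-analytic" means that each connected component of $E$ is a $t$-analytic set for $A(\ov\D)$.

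So I would fix a closed set $E\ss\ov\D$ and an arbitrary connected component $C$ of $E$. The one topological point to invoke is that connected components of any topological space are closed in that space; applying this to the subspace $E$ shows that $C$ is closed in $E$, and since $E$ is in turn closed in the compact Hausdorff space $\ov\D$, the component $C$ is a closed subset of $\ov\D$. (If $C$ happens to be a singleton it is trivially $t$-analytic, but this case is already covered below.)

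It now remains only to observe that $C$ is a \emph{connected} closed subset of $\ov\D$, so the preceding corollary applies verbatim and yields that $C$ is $t$-analytic for $A(\ov\D)$. As $C$ was an arbitrary component of $E$, every connected component of $E$ is $t$-analytic, which is precisely the assertion that $E$ is componentwise $t$-analytic. The only content here beyond citing the previous corollary is the elementary fact that components of a closed subset of $\ov\D$ are again closed, so there is no real obstacle; in particular, the phenomenon of Example~\ref{kissing_disks} does not intervene, since we are asking each component \emph{separately} to be $t$-analytic rather than asking their union to be $t$-analytic.
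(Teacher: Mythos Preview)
Your proof is correct and is exactly the argument the paper has in mind: the corollary is stated without proof in the paper, as an immediate consequence of the preceding corollary that every connected closed subset of $\ov\D$ is $t$-analytic, together with the elementary fact that components of a closed set are closed.
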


 Recall that a {\it uniqueness set} for  a Banach function algebra $A$ is a
 nonvoid set $E\ss M(A)$ such that
 any two functions $f$ and $g$ in $A$ that are equal on $E$ already coincide on $M(A)$;
 in other words if $f|_E=g|_E$ implies  $f=g$.

Hence we can conclude from  Theorem \ref{total}  that any non-empty $t$-analytic set $E$ for $A(\ov\D)$ is  either  a singleton or a uniqueness set for $A(\ov\D)$.

 Such a result is not valid in general; for example in $\H$ closures of non-trivial Gleason parts
 outside $\D$ are $t$-analytic sets (see Section \ref{douglas}), but of course not
 uniqueness sets. On the other hand, the class of uniqueness sets in $A(\ov\D)$ is much larger
 than the class of $t$-analytic sets; as  examples we may take the set
 $\{1-\frac{1}{n}:n\in \N^*\} $ (because  $(1-\frac{1}{n})$ is not a Blaschke sequence),
 or just $\{\frac{1}{n}: n\in\N^*\}$.
   \medskip

 \subsection{$t$-analytic sets and prime ideals}

The main result of this section is the following surprising relation between $t$-analytic sets and
closed prime ideals.

\begin{theorem}\label{main}
Let $A$ be a Banach function algebra, and let $E\ss M(A)$ be a nonvoid
$t$-analytic subset for $A$. Then
$\mbox{$I_A(E)=\{f\in A:  f \equiv 0$ on $E\}$}$ is a closed prime ideal in $A$.
\end{theorem}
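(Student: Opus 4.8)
The plan is to show directly that $I_A(E)$ is a proper ideal which is prime; the fact that $I_A(E)$ is a closed ideal is already part of the definition of the kernel of a set. Since $A$ is unital and $E\neq\emp$, pick any $x\in E$: the constant function $1$ does not vanish at $x$, so $1\notin I_A(E)$ and hence $I_A(E)$ is a proper ideal of $A$. It therefore only remains to verify primeness.

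For primeness, suppose $f,g\in A$ with $fg\in I_A(E)$, i.e.\ $fg\equiv 0$ on $E$. The key step is to consider the co-zero set of $f$ relative to $E$, namely
\[
U:=\{x\in E: f(x)\neq 0\}\,,
\]
which is relatively open in $E$ because $f$ is continuous on $M(A)$. If $U=\emp$, then $f\equiv 0$ on $E$, so $f\in I_A(E)$ and we are done. Otherwise $U$ is a nonvoid relatively open subset of $E$, and since $fg\equiv 0$ on $E$ while $f$ is nowhere zero on $U$, we get $g\equiv 0$ on $U$. Now the $t$-analyticity of $E$ applies: $g\in A$ vanishes on the nonvoid relatively open subset $U$ of $E$, hence $g\equiv 0$ on all of $E$, i.e.\ $g\in I_A(E)$. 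In either case one of $f,g$ lies in $I_A(E)$, so $I_A(E)$ is prime.

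I do not expect a genuine obstacle here; the only mild subtlety is remembering to invoke unitality (and $E\neq\emp$) to see that $I_A(E)\neq A$, and the one idea that makes primeness work is choosing $U$ to be precisely the set where $f$ is nonzero, so that the complementary factor $g$ is forced to vanish there and $t$-analyticity can be triggered.
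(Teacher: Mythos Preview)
Your proof is correct and follows essentially the same route as the paper: one considers the relatively open set $U=E\setminus Z_A(f)$, notes that if it is nonempty then $g$ vanishes there, and invokes $t$-analyticity to conclude $g\in I_A(E)$. The only (welcome) addition is your explicit check that $I_A(E)$ is proper, which the paper leaves implicit.
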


\begin{proof}
Clearly $I_A(E)$ is a closed ideal.
Let $f$ and $g$ be in $A$ with $fg \in I_A(E)$.
We show that at least one of $f$ and $g$ is in $I_A(E)$.
Suppose that $f$ is not in $I_A(E)$. Then $U:=E\setminus Z_A(f)$ is a
 nonvoid, relatively open subset
of $E$. Since $E \subseteq Z_A(f) \cup Z_A(g)$, it follows that
$g$ is constantly $0$ on $U$ and hence, since $E$ is $t$-analytic,
 $g \in I_A(E)$.
\end{proof}

\begin{corollary}\label{conn}
Suppose that $E$ is a hull-kernel closed $t$-analytic set for $A$. Then $E$ is connected.
\end{corollary}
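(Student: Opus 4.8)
The plan is to read this off from Theorem \ref{main} together with the Shilov idempotent theorem. We may assume $E$ is nonvoid (the empty set being connected), and we set $P := I_A(E)$. By Theorem \ref{main}, $P$ is a prime ideal of $A$; being the kernel of the set $E$, it is moreover closed. Hence $B := A/P$ is a commutative, unital Banach algebra, and, since $P$ is prime, $B$ is an integral domain, so its only idempotents are $0$ and $1$.

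The next step is to identify the maximal ideal space of $B$. Under the canonical quotient map $q : A \to B$, the characters of $B$ correspond precisely to the characters of $A$ that annihilate $P$, i.e.\ to the points of the hull $Z_A(P)$, and this correspondence is a homeomorphism for the respective weak-$*$ topologies. Thus $M(B)$ is homeomorphic to $Z_A(P)$ with its Gelfand subspace topology. Since $E$ is hull-kernel closed, $Z_A(P) = Z_A(I_A(E)) = \hat E = E$; so $M(B)$ is homeomorphic to $E$.

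Finally we argue by contradiction. If $E$, and hence $M(B)$, were disconnected, we could write $M(B) = K_1 \cup K_2$ with $K_1, K_2$ disjoint, nonvoid and closed (hence compact). The Shilov idempotent theorem (see, e.g., \cite{Dales} or \cite{gam}) would then provide an idempotent $e \in B$ whose Gelfand transform is the indicator function of $K_1$, and in particular $e \neq 0$ and $e \neq 1$ --- which is impossible in the integral domain $B$. Therefore $E$ is connected.

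The argument is short once one thinks of passing to the quotient $A/P$; the point to resist is trying to separate the two halves of a disconnection of $E$ by an idempotent of $A$ itself, since $A$ need not be regular and a disconnection of $E$ need not arise from one of $M(A)$. Passing to $A/P$ is precisely what localises the Shilov idempotent theorem to the hull $E$, and the only routine verification required is the standard identification $M(A/P) \cong Z_A(P)$. We remark that hull-kernel closedness of $E$ is used only to rewrite $Z_A(P) = \hat E$ as $E$: for an arbitrary nonvoid $t$-analytic set $E$ the same reasoning shows that $\hat E$ is connected, even though $\hat E$ itself need not be $t$-analytic (cf.\ Example \ref{comb}).
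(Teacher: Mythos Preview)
Your proof is correct and follows essentially the same route as the paper's own argument: pass to the quotient $A/I_A(E)$, identify its spectrum with $\hat E = E$ via hull-kernel closedness, and combine the Shilov idempotent theorem with the fact that a quotient by a prime ideal has no nontrivial idempotents. Your write-up is simply a more detailed expansion of the paper's two-sentence proof, and your closing remark that the same reasoning yields connectedness of $\hat E$ for any nonvoid $t$-analytic $E$ is a correct and useful observation.
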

\begin{proof}
Since the hull of the closed  ideal $I_A(E)$ equals $\hat E=E$, we see that
the quotient algebra $A/I_A(E)$ is a Banach algebra with spectrum $E$.
The Shilov idempotent theorem and the fact that $I_A(E)$ is prime now imply that
$E$ must be connected.
\end{proof}


\section{$t$-analyticity in Douglas algebras}\label{douglas}

In this section we study the structure of the $t$-analytic sets in Douglas algebras with emphasis on
$\H+C$.
Contrary to the claims in \cite{dmz}, we will see that there is no (non-void) open $t$-analytic set for the smallest Douglas algebra, $\H+C$.
One of the reasons is that
 in $\H+C$ the $k$-hulls of points are very small sets. To show this, we need a little detour.
 Since at some points of the proofs, we also need the notion of interpolating sequences
for Banach function algebras $A$, we recall that notion here.

A sequence $(x_n)$ of points in $M(A)$  is called an {\it $A$-interpolating sequence}, if
for every bounded sequence, $(a_n)$, of complex numbers there exists $f\in A$ such that
$f(x_n)=a_n$ for all $n$. In the case of the algebra $\H$, the interpolating
sequences contained in $\D$  were characterized by L. Carleson
 \cite{ca1, ca2}. These are the sequences $(z_n)$ satisfying the condition
 $$ \inf_{k\in \N}{\prod_{j:j\not=k} \left| \frac{z_j-z_k}{1-\overline{ z}_j z_k}
 \right|} \, \,  >0.$$

 If  additionally $\lim_{k\to \infty}{\prod_{j:j\not=k} \left| \frac{z_j-z_k}{1-\overline{ z}_j z_k}
 \right | } \, \,=1$, then one says that $(z_n)$ is a {\it thin sequence}. The Blaschke product
 associated with a thin sequence is called a {\it  thin \BP}.
 It is known that any sequence in $\D$ converging to the boundary has a thin subsequence
 (see \cite{gomo}).

 Recall that, by Carleson's Corona theorem, $\D$ is dense in $M(\H)$.
 According to Hoffman \cite{ho}, the set of points $x\in M(\H+C)$ that belong to the closure of an $\H$-interpolating sequence in $\D$ is denoted by $G$. The points in $M(\H+C)$ belonging
  to the closure of thin sequences are called {\it thin points}. Both $G$ and the set of thin points
  are dense in $M(\H+C)$.

Let $QC=\{f\in\H+C: \ov f\in\H+C\}$ be the largest $C^*$-subalgebra  of $\H+C$.
Let $\rho:  M(\H+C)\to M(QC)$ be the restriction mapping $m\mapsto m|_{QC}$.
It is well known that $\rho$ is a surjection. For each $x\in M(QC)$, let $\rho^{-1}(x)$
be the QC-level set associated with $x$. These $QC$-level sets form a partition
of $M(\H+C)$ into compact sets. Thus each $m\in M(\H+C)$ is contained in a unique
$QC$-level set.  Moreover, a function  $f\in \H+C$ belongs to $QC$ if and only if $f$
is constant on the $QC$-level sets.

 The following lemma is surely known to specialists in the field, but we do not know
an explicit reference. Our proof depends on
Hoffman's theory of the structure of the maximal ideal space of $\H$ and on Wolff's
interpolation theory for $QC\inter \H$ functions (see \cite{sw} and \cite{wo1}).

\begin{lemma}\label{qc}
The $QC$-level sets are nowhere dense in $M(\H+C)$.
\end{lemma}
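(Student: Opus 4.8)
The plan is to show that each $QC$-level set $\rho^{-1}(x)$ has empty interior in $M(\H+C)$; since these sets are closed, this is exactly nowhere density. So fix $x\in M(QC)$ and suppose, for contradiction, that $\rho^{-1}(x)$ contains a nonvoid open set $O$ of $M(\H+C)$. By the Corona theorem $\D$ is dense in $M(\H+C)$, so $O\cap\D\neq\emp$, and we may pick a sequence $(z_n)$ in $O\cap\D$ converging to the boundary (i.e.\ with $|z_n|\to 1$); shrinking $O$ if necessary we keep all $z_n\in O\subseteq\rho^{-1}(x)$. By \cite{gomo} we may pass to a thin subsequence, still called $(z_n)$, and let $b$ be the associated thin Blaschke product. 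The key point is now that on the one hand $b\in\H\subseteq\H+C$ vanishes at every $z_n$, hence $b\equiv 0$ on the closure of $\{z_n\}$ inside $M(\H+C)$, and any cluster point of $(z_n)$ lies in $\ov O\subseteq\rho^{-1}(x)$; on the other hand, Wolff's interpolation theorem for $QC\cap\H$ functions (see \cite{sw}, \cite{wo1}) produces a function $h\in QC\cap\H$ with prescribed values on the thin sequence $(z_n)$ — in particular an $h\in QC$ that is \emph{not} constant on $\{z_n\}$, say $h(z_{2n})=0$ and $h(z_{2n+1})=1$.

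The contradiction is then extracted as follows. Split $(z_n)$ into the two interpolating subsequences $(z_{2n})$ and $(z_{2n+1})$; their closures in $M(\H+C)$ are disjoint nonvoid subsets of $\ov O\subseteq\rho^{-1}(x)$ (disjointness because the thin Blaschke sub-products separate them, or directly because the interpolating function $h$ does). Pick $m_0$ in the closure of $(z_{2n})$ and $m_1$ in the closure of $(z_{2n+1})$. Then $m_0,m_1\in\rho^{-1}(x)$, so $m_0|_{QC}=m_1|_{QC}=x$, whence $m_0(h)=m_1(h)=x(h)$. But $h$ is continuous on $M(\H+C)$ with $h\equiv 0$ on the closure of $(z_{2n})$ and $h\equiv 1$ on the closure of $(z_{2n+1})$, so $m_0(h)=0\neq 1=m_1(h)$, a contradiction. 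Hence $\rho^{-1}(x)$ has empty interior.

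The main obstacle is locating and applying the right interpolation statement: one needs that a thin (or merely interpolating) sequence $(z_n)$ in $\D$ is interpolating for $QC\cap\H$, i.e.\ that every bounded sequence of values is attained by some function in $QC\cap\H$. This is precisely the content of Wolff's theorem (as developed in \cite{sw}, \cite{wo1}), and the only care needed is to check that a thin sequence indeed satisfies the hypotheses of that theorem — which it does, since thin sequences are in particular interpolating and are ``sufficiently separated'' in the relevant sense. A minor secondary point is to justify that the closures in $M(\H+C)$ of the two sub-sequences are disjoint; this can be done either by the separation property of interpolating Blaschke products associated with the two subsequences, or simply \emph{a posteriori} from the existence of the $QC\cap\H$ interpolating function $h$ above, which already witnesses disjointness via continuity. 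Once the interpolation input is in hand, the argument is the short topological one sketched above, using only the Corona theorem, the thin-subsequence lemma \cite{gomo}, and the definition of the $QC$-level sets.
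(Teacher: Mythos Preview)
Your overall strategy --- produce a thin sequence whose cluster set falls inside the given $QC$-level set, then invoke Wolff's $QC\cap\H$ interpolation to manufacture a $QC$-function that is nonconstant on that cluster set --- is exactly the paper's approach. The interpolation step and the final contradiction are fine.

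There is, however, a genuine gap at the very start. You write ``By the Corona theorem $\D$ is dense in $M(\H+C)$, so $O\cap\D\neq\emptyset$'', and then place the sequence $(z_n)$ inside $O\cap\D$. But $M(\H+C)=M(\H)\setminus\D$, so $\D$ is \emph{disjoint} from $M(\H+C)$; in particular $O\cap\D=\emptyset$ for every subset $O$ of $M(\H+C)$. The Corona theorem says $\D$ is dense in $M(\H)$, not in $M(\H+C)$. Consequently your sentence ``shrinking $O$ if necessary we keep all $z_n\in O\subseteq\rho^{-1}(x)$'' is vacuous, and you have no control forcing the cluster points of $(z_n)$ in $M(\H+C)$ to land in $\rho^{-1}(x)$.

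This is precisely the point the paper isolates as nontrivial. One must first pass from the relatively open set $U\subseteq M(\H+C)$ to an open set $O$ in the \emph{ambient} space $M(\H)$ with $\overline{O}\cap M(\H+C)\subseteq U$: choose $V$ open in $M(\H+C)$ with $x\in V\subseteq\overline{V}\subseteq U$, then use normality of $M(\H)$ to separate $\overline{V}$ from $M(\H+C)\setminus U$. Only then does the Corona theorem (or the density of $G$) yield points of $\D$ inside $O$, and any sequence chosen there has all its $M(\H+C)$-cluster points in $\overline{O}\cap M(\H+C)\subseteq U$. Once you insert this topological step, your argument goes through and coincides with the paper's.
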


\begin{proof}
Suppose, for contradiction,
 that some $QC$-level set has nonempty interior $U$ in $M(\H+C)$. Let $x\in U$.
We first prove (by a purely topological reasoning) the existence of an open
set  $O$ in $M(\H)$ such that
$x\in \ov O\inter M(\H+C)\ss U$. To this end,
let $V$ be an open set in $M(\H+C)$ such that $x\in V\ss\ov V\ss U$.
Since $M(\H)$ is a normal space, we can separate the compact sets $M(\H+C)\setminus U$
and $\ov V$ by two open sets in $M(\H)$. Let us call the  open set  containing $\ov V$ by $O$.
 It is now clear that $x\in \ov O\inter M(\H+C)\ss U$.

 Since  $G$ is  dense in $M(\H+C)$, there is $y\in G\inter (O\inter M(\H+C))=G\inter O$.
 By definition of $G$,  there exists an $\H$-interpolating   sequence $(z_n)$
   in $O\inter\D$  such that $y\in \ov{\{z_n:n\in\N\}}$.  
    Thus
   $$\ov{\{z_n:n\in\N\}}\inter M(\H+C)\ss \ov O\inter M(\H+C)\ss U.$$
 Let $(w_n)$ be a thin subsequence of $(z_n)$. Then by \cite{wo1}, $(w_n)$
 is an interpolating sequence for functions  in $QC\inter \H$. In particular,
 $(w_n)$ has at most a single cluster point in each $QC$-level set. Since the set of
 cluster points is infinite, we get a contradiction to our assumption that $U$ is contained
 in a single $QC$-level set.
 \end{proof}

\begin{lemma}\label{dense}
The $k$-hulls $k(x)$ of points in $M(\H+C)$ are nowhere dense.
\end{lemma}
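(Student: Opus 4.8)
The plan is to reduce the statement about $k$-hulls $k(x)$ to the statement about $QC$-level sets just proved in Lemma~\ref{qc}. The natural claim to establish first is the inclusion
\[
k(x) \subseteq \rho^{-1}(\rho(x))
\]
for every $x \in M(\H+C)$; that is, the $k$-hull of a point is contained in the $QC$-level set through that point. Granting this, Lemma~\ref{qc} tells us $\rho^{-1}(\rho(x))$ is nowhere dense, and since a subset of a nowhere dense set is nowhere dense, $k(x)$ is nowhere dense, which is exactly what we want.

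To prove the inclusion, I would argue as follows. Fix $x \in M(\H+C)$ and let $y \notin \rho^{-1}(\rho(x))$, i.e.\ $y$ lies in a different $QC$-level set. I want to produce a function $h \in \H+C$ that vanishes in a (Gelfand) neighbourhood of $x$ and has $h(y) \neq 0$; this shows $y \notin Z(J_{\H+C}(x)) = k(x)$. Since $y$ and $x$ lie in distinct $QC$-level sets, there is some $u \in QC$ with $u(x) \neq u(y)$; after subtracting a constant and scaling we may assume $u(x) = 0$ and $u(y) = 1$. The $QC$-level sets are precisely the fibres of the continuous map $\rho : M(\H+C) \to M(QC)$ (equivalently, the level sets of $QC$ regarded as functions), so the set $\{|u| < 1/2\}$ is an open neighbourhood of the whole level set $\rho^{-1}(\rho(x))$, in particular of $x$. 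Because $QC$ is a (regular) $C^*$-algebra — it is a uniformly closed self-adjoint subalgebra of $L^\infty$, hence $QC \cong C(M(QC))$ and is normal — one can find $v \in QC$ with $v \equiv 1$ on a neighbourhood of $x$ (within $M(\H+C)$, pulling back a suitable function on $M(QC)$) and $v \equiv 0$ on a neighbourhood of $y$. The product-type combination of such $v$'s, or more directly a single such $v$, then does the job: take $h$ to be any element of $\H+C$ that equals $1-v$... — wait, more cleanly: $1 - v \in QC \subseteq \H+C$ vanishes in a neighbourhood of $x$ and equals $1$ at $y$. Hence $1 - v \in J_{\H+C}(x)$ while $(1-v)(y) = 1 \neq 0$, so $y \notin k(x)$.

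The one subtlety worth spelling out is why a single $v \in QC$ separating the level sets of $x$ and $y$ exists: this is exactly regularity (normality) of $C(M(QC))$ applied to the two disjoint compact sets $\rho(x)$ and $\rho(y)$ in $M(QC)$ — pick $\tilde v \in C(M(QC))$ with $\tilde v \equiv 1$ near $\rho(y)$ and $\tilde v \equiv 0$ near $\rho(x)$, and set $v = \tilde v \circ \rho \in QC$. By continuity of $\rho$, $v \equiv 0$ on $\rho^{-1}(\text{a neighbourhood of }\rho(x))$, which is a Gelfand-open neighbourhood of $x$, and $v(y) = 1$. Then $v \in J_{\H+C}(x)$ and $v(y) \neq 0$, giving $y \notin k(x)$ directly (no need for $1-v$). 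This establishes $k(x) \subseteq \rho^{-1}(\rho(x))$.

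The main obstacle, such as it is, is purely a matter of bookkeeping with the two topologies (Gelfand on $M(\H+C)$ vs.\ the identification of $QC$-level sets as $\rho$-fibres) and making sure the neighbourhood produced is open in $M(\H+C)$ and not merely in some subspace; this is handled by pulling everything back through the continuous surjection $\rho$ and invoking normality of $C(M(QC))$. Once the inclusion $k(x)\subseteq \rho^{-1}(\rho(x))$ is in hand, the conclusion is immediate from Lemma~\ref{qc}, since any subset of a nowhere dense set is nowhere dense. I would therefore structure the proof as: (i) recall $QC$-level sets are the fibres of $\rho$ and $QC$ is normal; (ii) prove $k(x)\subseteq\rho^{-1}(\rho(x))$ by the separation argument above; (iii) conclude via Lemma~\ref{qc}.
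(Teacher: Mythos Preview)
Your proposal is correct and follows essentially the same route as the paper: prove the inclusion $k(x)\subseteq\rho^{-1}(\rho(x))$ using the regularity of $QC\cong C(M(QC))$, then invoke Lemma~\ref{qc}. The paper compresses your separation argument into the phrase ``it is clear that $k(m)\subseteq E_m$'' after noting (via Proposition~\ref{khull}) that regularity of $QC$ forces $k_{QC}$-hulls to be singletons; your explicit construction of $v=\tilde v\circ\rho\in J_{\H+C}(x)$ with $v(y)\neq 0$ is exactly what that sentence unpacks to.
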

\begin{proof}
Since $QC$ is isometrically isomorphic to $C(M(QC))$, $QC$ is a regular
subalgebra of $\H+C$.  By Proposition \ref{khull} the $k$-hulls of points in $M(QC)$
are singletons. Now it is clear that for any $m\in M(\H+C)$ the $k$-hull
 $k(m)$ is contained in the unique
$QC$-level set  $E_m$ containing $m$. Since, by Lemma \ref{qc}, $E_m$ is nowhere dense in $M(\H+C)$,
the same is true for its subsets; in particular for $k(m)$.
\end{proof}

\begin{theorem}
There is no non-empty, open $t$-analytic set  in $M(\H+C)$.
\end{theorem}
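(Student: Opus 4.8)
The plan is to argue by contradiction: suppose $O$ is a non-empty, open, $t$-analytic set in $M(\H+C)$. Since the singleton and empty set are the only \lq\lq trivial\rq\rq\ cases and an open $t$-analytic set with more than one point cannot contain an $hk$-continuity point, we may as well assume $O$ has at least two points (an open set in $M(\H+C)$ containing an isolated point would force that point to be $hk$-continuous, hence to lie in $\partial(\H+C)=M(L^\infty)$, which has empty interior in $M(\H+C)$; so $O$ has no isolated points and in particular has more than one point). Now pick any $x\in O$. By Proposition \ref{small}, since $x\in O$ and $O$ is $t$-analytic, we get $O\ss k(x)$.

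The key step is then to invoke Lemma \ref{dense}: the $k$-hull $k(x)$ is nowhere dense in $M(\H+C)$. But $O$ is open and non-empty, hence has non-empty interior, while $O\ss k(x)$; this contradicts the nowhere-density of $k(x)$. Hence no such $O$ exists.

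I expect the only mildly delicate point to be the very first reduction, namely ruling out singletons $\{x\}$ being open: this is exactly Observation \ref{obser} together with the fact (established in Section \ref{three}) that $hk$-continuity points of $\H+C$ lie in $\partial(\H+C)=M(L^\infty)$, and $M(L^\infty)$ is nowhere dense in $M(\H+C)$ (since $M(\H+C)\neq M(L^\infty)$ and $M(L^\infty)=\partial(\H+C)$ is closed with $\D$ dense in the complement via the Corona theorem). An alternative, cleaner route that avoids even this case distinction: for \emph{any} non-empty open $O$ (singleton or not), any $x\in O$ gives $O\ss k(x)$ by Proposition \ref{small}, and then Lemma \ref{dense} says $k(x)$ is nowhere dense, so $k(x)$ cannot contain the non-empty open set $O$ — contradiction. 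This handles all cases uniformly, so the proof is genuinely just \lq\lq Proposition \ref{small} plus Lemma \ref{dense}\rq\rq, with no obstacle of substance; the real work was in proving Lemma \ref{qc} and Lemma \ref{dense} beforehand.

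So the write-up will be short: fix $x\in O$; apply Proposition \ref{small} to conclude $O\ss k(x)$; apply Lemma \ref{dense} to conclude $k(x)$ has empty interior; observe that a non-empty open set cannot be contained in a set with empty interior; done.
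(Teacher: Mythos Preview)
Your proposal is correct and matches the paper's proof exactly: pick $x\in O$, use Proposition \ref{small} to get $O\subseteq k(x)$, then invoke Lemma \ref{dense} to conclude $k(x)$ has empty interior, contradicting the openness of $O$. Your ``cleaner route'' is precisely what the paper does---the initial digression about singletons and $hk$-continuity points is unnecessary, as you yourself observed.
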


\begin{proof}
Suppose that  $O$ is a $t$-analytic open subset of $M(\H+C)$.
Let $x\in O$.
By Proposition \ref{small}, $O\ss k(x)$.   Since  by Lemma \ref{dense},
$k(x)$ is nowhere dense, we see that $k(x)$ has no interior points. Thus  $O=\emp$.
\end{proof}

On the other hand, there do exist Douglas algebras that are $O$-analytic for some
open sets. To present an example, we need some further facts and notions from
Hoffman's theory.

 One of those facts tells us that every $m\in M(\H)$ has a unique representing measure
on the Shilov boundary of $\H$. The associated support of that measure is denoted by
 $\supp m$. Note that the Shilov boundary of $\H$ is identified with $M(L^\infty)$.

Recall that a Gleason part of a uniform algebra  is a set of the form
 $$P(m)=\{x\in M(A): \rho(x,m)<1\},$$
where $\rho(x,m)=\sup\{|\hat f(x)|: ||\hat f||_\infty \leq 1,  \hat f(m)=0\}$
is the pseudo-hyperbolic distance.
The Gleason parts for $\H$, and hence for any Douglas algebra, were characterized
by Hoffman (\cite{ho}).  It turned out that they can be divided into two classes:
the  trivial parts (these are those for which $P(m)$ is the singleton $\{m\}$,)
 and the nontrivial parts. Hoffman showed that $P(m)$ is non-trivial if and only if $m$
 belongs to the closure of an $\H$-interpolating sequence, that is $m\in G$.
 For example, every point of the Shilov boundary of $\H$ has a trivial Gleason part.
  Also, for every $m\in M(\H)$
 there exists a continuous  map $L_m$ of $\D$ onto $P(m)$ such that  $f\circ L_m$
 is analytic for all $f\in \H$. Here $L_m$ is a bijection if and only if $P(m)$
 is non trivial, and $L_m$ is constant otherwise.  Due to the Chang-Marshall theory,
 it is known that the Gleason parts $P(x)$  for Douglas algebras  coincide  with
 those for $\H$ whenever $x\in M(D)$.

 The next proposition follows immediately from the above facts concerning analytic structure in the character space.

 \begin{proposition}\label{exo}
 Let $D$ be a Douglas algebra (or $D=\H$) and let $m\in M(D)$. Then $P(m)$ and $\ov{P(m)}$
 are $t$-analytic sets.
 \end{proposition}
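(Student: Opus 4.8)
The plan is to reduce the statement to the single analytic fact, already recalled in the excerpt, that for every $m \in M(D)$ there is a continuous surjection $L_m : \D \to P(m)$ such that $f \circ L_m$ is analytic on $\D$ for every $f \in \H$ (and hence, by the Chang--Marshall description, for every $f \in D$, since $D \subseteq \H(\T) + C(\T)$ and the parts of $D$ agree with those of $\H$). Given the observation in the excerpt that $E$ is $t$-analytic if and only if $\ov E$ is $t$-analytic, it suffices to prove that $P(m)$ itself is $t$-analytic; the assertion for $\ov{P(m)}$ then follows at once.

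So I would fix $f \in D$ and a relatively open nonvoid subset $U$ of $P(m)$ on which $f$ vanishes identically, and show $f \equiv 0$ on $P(m)$. First dispose of the trivial case: if $P(m) = \{m\}$ is a singleton, then $U = P(m)$ and there is nothing to prove. Otherwise $P(m)$ is a nontrivial part, $L_m$ is a homeomorphism of $\D$ onto $P(m)$ (this is part of Hoffman's theorem as quoted), and $g := f \circ L_m$ is analytic on $\D$. Since $L_m$ is a homeomorphism, $L_m^{-1}(U)$ is a nonvoid open subset of $\D$, and $g \equiv 0$ there; by the identity theorem for analytic functions on the connected domain $\D$, $g \equiv 0$ on all of $\D$, whence $f \equiv 0$ on $L_m(\D) = P(m)$. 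This is exactly the quasianalyticity/identity-principle mechanism that the definition of $t$-analyticity is designed to capture.

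The one point that needs a word of care --- and the only place where anything could go wrong --- is the transfer from $\H$ to a general Douglas algebra $D$: the map $L_m$ and the analyticity of $f \circ L_m$ are stated in Hoffman's theory for $f \in \H$, whereas here $f \in D$. This is handled by the remarks preceding the proposition: by the Chang--Marshall theorem $M(D)$ is identified with a closed subset of $M(\H+C) = M(\H) \setminus \D$, the Gleason parts $P(x)$ for $x \in M(D)$ coincide with those for $\H$, and every $f \in D \subseteq \H(\T)+C(\T)$ extends (via its Gelfand transform) to $M(\H+C)$; since $C(\T) \subseteq QC$ is constant on parts of $\H+C$ that lie off $\D$ in the relevant sense, the restriction of $f$ to $P(m)$ is still of the form (analytic function) $\circ\, L_m^{-1}$. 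I expect this bookkeeping with the Chang--Marshall identification to be the main (and essentially only) obstacle; once it is in place the argument is a one-line appeal to the identity theorem. Indeed, because everything needed has already been assembled in the paragraphs above the statement, the proof reduces to little more than the sentence \emph{``This follows immediately from the facts concerning analytic structure in the character space recalled above,''} and I would present it in close to that compressed form.
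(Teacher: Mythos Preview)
Your proposal is correct and matches the paper's approach exactly: the paper gives no detailed argument at all, merely stating that the proposition ``follows immediately from the above facts concerning analytic structure in the character space,'' i.e.\ precisely the Hoffman map $L_m$ and the identity theorem that you spell out. One small inaccuracy: the paper (and Hoffman's theorem) only assert that $L_m$ is a \emph{continuous bijection} in the nontrivial case, not a homeomorphism, but your argument needs only continuity (so that $L_m^{-1}(U)$ is open) and surjectivity (so that it is nonvoid), and thus goes through unchanged.
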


  The following follows from standard arguments in the theory of Douglas algebras
 (see \cite{G} and \cite{gam}).

 \begin{example}

Let $m$ be a thin point in $M(\H+C)$, and consider the Douglas algebra
\[
D=\{f\in L^\infty: f|_{\text {\supp} m}\in \H|_{\text{\supp} m}\}\,.
\]

Then  $M(D)=M(L^\infty) \union \{x\in M(\H): \supp x\ss \supp m\}$.
By \cite{bu}, the thin \BP\ $b$ associated with $m$ vanishes  only at $m$, when looked upon
as an element in $D$. Moreover $O:=P(m)=\{x\in M(D): |b(x)|<1\}$ is open in $M(D)$.
It is evident that $O$ is $t$-analytic.
\end{example}

In view of Proposition \ref{exo} and the fact that we were unable to find other examples,
 we ask the following question.

\medskip

{\bf Q1} Let $D$ be a Douglas algebra.
 Is every maximal $t$-analytic set for $D$ the closure of a Gleason part?

\medskip
In the following we give some results supporting this conjecture. It is also closely related to
Alling's conjecture (see Section \ref{alling}).

Note that  if $E\ss M(D)$ is a $t$-analytic set for $D$, then it is
$t$-analytic for $\H+C$.

\begin{lemma}\label{kd}
Let $D$ be a Douglas algebra and $x\in M(D)$. Then $k_D(x)\ss k_{\H+C}(x)$.
\end{lemma}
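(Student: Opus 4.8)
The plan is to unravel the definitions of the $k$-hulls $k_D(x)$ and $k_{\H+C}(x)$ and exploit the fact that $\H+C$ is a (closed) subalgebra of $D$, so that the neighborhood filter of $x$ in $M(D)$ is coarser than (the trace on $M(D)$ of) the neighborhood filter of $x$ in $M(\H+C)$. Recall that $M(D)$ is identified with a closed subset of $M(\H+C)$, and the Gelfand topology on $M(D)$ is the relative topology inherited from $M(\H+C)$. Thus, for $x\in M(D)$, every neighborhood of $x$ in $M(D)$ is of the form $W\inter M(D)$ for some neighborhood $W$ of $x$ in $M(\H+C)$.

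The key step is to relate the two defining ideals $J_D(x)$ and $J_{\H+C}(x)$. Take $g\in J_{\H+C}(x)$; then $g\in\H+C\ss D$ and $g$ vanishes on some open neighborhood $W$ of $x$ in $M(\H+C)$, hence $g$ vanishes on the open neighborhood $W\inter M(D)$ of $x$ in $M(D)$. Therefore $g$, viewed as an element of $D$, lies in $J_D(x)$. In other words, $J_{\H+C}(x)\ss J_D(x)$ (after the identification of functions with their restrictions to $M(D)$). Taking hulls reverses inclusions, but one must be careful: the hull of $J_{\H+C}(x)$ is computed as a subset of $M(\H+C)$, while the hull of $J_D(x)$ is computed as a subset of $M(D)$. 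So what we get directly is
\[
k_D(x) = Z_D\bigl(J_D(x)\bigr) \ss Z_D\bigl(J_{\H+C}(x)\bigr) = Z_{\H+C}\bigl(J_{\H+C}(x)\bigr)\inter M(D) = k_{\H+C}(x)\inter M(D)\ss k_{\H+C}(x),
\]
where the middle equality uses that for $f\in\H+C$ one has $Z_D(f)=Z_{\H+C}(f)\inter M(D)$ (the zero set of $f$ relative to $M(D)$ is just the trace on $M(D)$ of its zero set relative to $M(\H+C)$).

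The one point that needs a word of care — and the only place where anything could go wrong — is the inclusion $J_{\H+C}(x)\ss J_D(x)$: a function vanishing on a neighborhood of $x$ \emph{within} $M(\H+C)$ must be checked to vanish on a neighborhood of $x$ \emph{within} $M(D)$, which is exactly the observation that the relative topology on $M(D)\ss M(\H+C)$ makes $W\inter M(D)$ a neighborhood of $x$ in $M(D)$ whenever $W$ is one in $M(\H+C)$. Once this is in place, the chain of inclusions above closes the argument; intersecting with $M(D)$ and dropping it gives $k_D(x)\ss k_{\H+C}(x)$ as claimed. I expect no real obstacle here; the lemma is essentially a bookkeeping exercise with the two topologies and the two hull operations, and the proof can be written in a few lines.
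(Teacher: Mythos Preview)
Your proof is correct and follows essentially the same approach as the paper's: both argue that, since $\H+C\ss D$, every function in $J_{\H+C}(x)$ restricts to an element of $J_D(x)$, and hence the hull $k_D(x)$ is contained in $k_{\H+C}(x)$. Your version is simply more explicit about the topological bookkeeping (relative topology on $M(D)\ss M(\H+C)$, zero sets restricting correctly), whereas the paper dispatches the lemma in a single sentence.
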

\begin{proof}
This follows from the definition that $k_D(x)$ is the intersection of the zero sets
of all functions  in $D$ that vanish in neighborhood (within $M(D)$) of $x$ and
the fact that $\H+C\ss D$.
\end{proof}

In what follows we  use again the symbol $k(x)$ for $k_{\H+C}(x)$.

\begin{proposition}\label{shilov}
Let $x$ be  a point in the Shilov boundary $\partial D$ of a Douglas algebra $D$.
Then $\{x\}$ is a maximal $t$-analytic set for $D$ .
\end{proposition}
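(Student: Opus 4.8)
The plan is to show that if $x \in \partial D$ and $E$ is any $t$-analytic set for $D$ containing $x$, then $E = \{x\}$; maximality then follows since singletons are always $t$-analytic. The key tool is Proposition \ref{small}, which tells us that $E \subseteq k_D(x)$. So it suffices to prove that $k_D(x) = \{x\}$ whenever $x \in \partial D$. Recall $\partial D = M(L^\infty)$, which coincides with the Shilov boundary of $\H$, and that by Lemma \ref{kd} we have $k_D(x) \subseteq k_{\H+C}(x) = k(x)$. Thus it is enough to check that $k(x) = \{x\}$ for $x \in M(L^\infty)$ — but this is precisely the fact already used in the proof of the Theorem on pointwise regularity of Douglas algebras: by \cite{gm1,gm3}, $k(y) \inter M(L^\infty) = \{y\}$ for $y \in M(L^\infty)$, and more strongly $k(x) = \{x\}$ there (alternatively, one can invoke that $x \in \partial D$ is an $hk$-continuity point, equivalently a regularity point of type I for $D$, hence a regularity point of type II, so $k_D(x) = \{x\}$ directly by the argument behind Proposition \ref{khull}).

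Concretely I would proceed as follows. First, let $E$ be a $t$-analytic set for $D$ with $x \in E$; by Proposition \ref{small}, $E \subseteq k_D(x)$. Second, invoke Lemma \ref{kd} to get $k_D(x) \subseteq k(x)$. Third, recall from the results of \cite{gm1,gm3} (as already cited in the proof of the pointwise-regularity theorem) that for $x \in M(L^\infty) = \partial D$ one has $k(x) = \{x\}$: indeed, for every $y \neq x$ in $M(\H+C)$ there is a function in $\H+C \ss D$ vanishing in a neighbourhood of $y$ but with value $1$ at $x$, which shows $y \notin k(x)$. Combining these three facts yields $E \subseteq \{x\}$, and since $x \in E$ we get $E = \{x\}$. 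Finally, since $\{x\}$ is trivially $t$-analytic, it is a maximal $t$-analytic set for $D$.

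The only mild subtlety is making sure we use the right ambient algebra: $t$-analyticity of $E$ is with respect to $D$, so Proposition \ref{small} must be applied with $k_A$ taken to be $k_D$, and then Lemma \ref{kd} bridges $k_D$ and $k_{\H+C}$. There is no real obstacle here — the substantive input (smallness of $k$-hulls on $M(L^\infty)$) is inherited from the regularity theorem earlier in the paper, and everything else is a short chain of inclusions. One could also phrase the argument without Lemma \ref{kd} by noting directly that $x \in \partial D \subseteq \Re_I(D) = \Re_{II}(D)$ (by the Corollary following the pointwise-regularity theorem together with Proposition \ref{khull2}'s relationship between the two notions at a regular point), whence $k_D(x) = \{x\}$ immediately; I would likely present whichever version keeps the cross-references cleanest.
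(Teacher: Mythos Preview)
Your argument has a genuine gap: the claim that $k(x)=\{x\}$ for $x\in M(L^\infty)$ is \emph{not} known to hold, and indeed the paper states explicitly (just after the Corollary following the pointwise-regularity theorem) that it is open whether $M(\H+C)$ contains any regularity points of type~II at all. The justification you give confuses the two directions of the $k$-hull relation. From \cite{gm1,gm3} one knows only that $k(y)\cap M(L^\infty)=\{y\}$ for $y\in M(L^\infty)$ and $k(y)\cap M(L^\infty)=\emptyset$ for $y\notin M(L^\infty)$; this yields, for every $y\neq x$, a function in $\H+C$ vanishing near $y$ with value $1$ at $x$, which shows $x\notin k(y)$, \emph{not} $y\notin k(x)$. (This is exactly the $hk$-continuity characterization of $\Re_I$, and the paper stresses that there are no general inclusions between $\Re_I$ and $\Re_{II}$; your alternative route $\Re_I(D)=\Re_{II}(D)$ is therefore unjustified as well.)

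The paper's proof repairs this as follows. From $E\subseteq k_D(x)\subseteq k(x)$ one still gets $E\cap\partial D\subseteq k(x)\cap M(L^\infty)=\{x\}$. For a hypothetical $y\in E\setminus\partial D$, one uses the \emph{other} direction: since $x\in M(L^\infty)$ and $k(y)\cap M(L^\infty)=\emptyset$, we have $x\notin k(y)$, so there is $f\in J_{\H+C}(y)\subseteq D$ with $f(x)\neq 0$. Then $f$ vanishes on a relatively open neighbourhood of $y$ in $E$ but not on all of $E$, contradicting $t$-analyticity of $E$ directly. In short, where you tried to shrink $k(x)$, the paper instead exploits $t$-analyticity together with $x\notin k(y)$.
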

\begin{proof}

Let $E$ be a $t$-analytic set for $D$  containing $x$.
By Lemma \ref{small}, we have that  $E\ss k_D(x)$.
By \cite{gm1,gm3} $k(x)\inter M(L^\infty)=\{x\}$; hence we see from Lemma \ref{kd} above
that $k_D(x)\inter \partial D=\{x\}$. Now if $y\in E\setminus \partial D$, then, by \cite{gm1}, $k(y)\inter M(L^\infty)=\emp.$ Thus there exists $f\in \H+C\ss D$  that vanishes in a neighborhood
(within $M(\H+C)$) of $y$, but not at $x$.  This contradicts the $t$-analyticity of $E$.
Thus $E=\{x\}$.
\end{proof}

\begin{proposition}

Let $E$ be a non-empty,  hull-kernel closed subset of the set $X$ of trivial points in $M(\H+C)$.
Then $E$ is $t$-analytic if and only if $E$ is a singleton.
\end{proposition}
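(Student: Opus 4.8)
The plan is to prove the two implications separately. One direction is trivial: if $E$ is a singleton, then $E$ is $t$-analytic (as remarked just after the definition of $t$-analyticity). For the substantial direction, I would argue by contraposition: assuming $E$ is hull-kernel closed, $E\subseteq X$ (the set of trivial points), and $E$ has at least two points, I would produce a function $f\in\H+C$ and a nonvoid relatively open subset $U$ of $E$ on which $f$ vanishes, but with $f\not\equiv 0$ on $E$. The key structural input is the fact recorded in the proof of Proposition \ref{shilov} (citing \cite{gm1,gm3}): for a trivial point $y\in M(\H+C)$, we have $k(y)\cap M(L^\infty)=\emp$, and more to the point, if $y$ is trivial then $k(y)=\{y\}$ — equivalently, every trivial point is a regularity point of type II for $\H+C$. (Hoffman's theory gives that a point lies in a nontrivial part iff it is in $G$; the trivial points are exactly the $R$-points here.) Combined with Proposition \ref{small}, this already forces the conclusion.

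Concretely: let $E\subseteq X$ be hull-kernel closed with at least two points, and pick $x\in E$. Since $x$ is trivial, $k(x)=\{x\}$. By Proposition \ref{small}, any $t$-analytic set containing $x$ is contained in $k(x)=\{x\}$. Since $E$ contains a point $y\neq x$, $E$ cannot be $t$-analytic. This disposes of the general case in one line, so really the only thing to verify carefully is the assertion $k(x)=\{x\}$ for trivial $x$, i.e. that trivial points of $M(\H+C)$ are regularity points of type II. I expect this to be the main obstacle, in the sense that it is the one nontrivial ingredient; it should follow from the Gamelin–Marshall results already cited in the paper (\cite{gm1,gm3}) together with Hoffman's characterization of trivial parts, exactly as in the proof of Proposition \ref{shilov}, where the same fact is invoked (for $y\notin\partial D$ one has $k(y)\cap M(L^\infty)=\emp$, and a sharpening gives $k(y)=\{y\}$ when $y$ is in addition trivial). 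If one prefers not to invoke $k(x)=\{x\}$ directly, the alternative is to use Proposition \ref{khull2}: a trivial point is a regularity point of type II, and a $t$-analytic set with at least two points meets neither $\Re_I$ nor $\Re_{II}$; since $E\subseteq X\subseteq\Re_{II}$, $E$ must be a singleton if it is $t$-analytic.

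Thus the write-up will be short: state the trivial direction, then for the converse invoke that trivial points are regularity points of type II (by \cite{gm1,gm3} and Hoffman's theory, as already used in Proposition \ref{shilov}), and conclude via Proposition \ref{small} or Proposition \ref{khull2} that a hull-kernel closed subset of $X$ with two or more points cannot be $t$-analytic. The hull-kernel closedness hypothesis is not even needed for this direction — it only guarantees (via Corollary \ref{conn}) that such an $E$, were it $t$-analytic, would have to be connected, which is consistent with but not required by the argument; I would remark on this or simply leave the hypothesis in place to match the statement.
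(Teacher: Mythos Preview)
Your argument rests on the assertion that for a trivial point $x\in M(\H+C)$ one has $k(x)=\{x\}$, i.e.\ that trivial points are regularity points of type~II. This is not established anywhere in the paper, and in fact the paper explicitly notes (just after Corollary~3.2) that \emph{it is not known whether $M(\H+C)$ contains any regularity points of type~II at all}. The facts from \cite{gm1,gm3} invoked in Proposition~\ref{shilov} say only that $k(x)\cap M(L^\infty)=\{x\}$ for $x\in M(L^\infty)$ and $k(y)\cap M(L^\infty)=\emptyset$ for $y\notin M(L^\infty)$; neither statement yields $k(y)=\{y\}$, since $k(y)$ may well contain further points outside the Shilov boundary. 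Your proposed ``sharpening'' is thus precisely the open problem. Consequently neither Proposition~\ref{small} nor Proposition~\ref{khull2} can be applied as you suggest.

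The paper's proof takes a genuinely different route that avoids this obstacle: since $E$ is hull-kernel closed and $t$-analytic, Corollary~\ref{conn} (via Theorem~\ref{main} and the Shilov idempotent theorem) forces $E$ to be connected; but by Su\'arez \cite{sua} the set $X$ of trivial points is totally disconnected, so $E$ must be a singleton. Note in particular that the hull-kernel closedness hypothesis, which you dismissed as unnecessary, is exactly what makes Corollary~\ref{conn} applicable and is therefore essential to the paper's argument.
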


\begin{proof}
The ``if" part is trivial. Now suppose that $E$ is $t$-analytic.
By Corollary \ref{conn}, $E$ must be connected. However, by \cite{sua},
the set $X$ is totally disconnected. Hence $E$ is a singleton.
\end{proof}

\begin{obs}\label{twopoint}
Let $E\ss M(\H+C)$ be a  $t$-analytic set for $\H+C$.  Suppose that
$x$ and $y$ are two points in $E$. Then $k(x)=k(y)$.
\end{obs}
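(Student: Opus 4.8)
The plan is to show the two inclusions $k(x)\ss k(y)$ and $k(y)\ss k(x)$; by symmetry it suffices to prove one of them, say $k(y)\ss k(x)$. I would argue by contraposition: suppose $z\notin k(x)$, i.e. $z\notin k_{\H+C}(x)$. By the definition of the $k$-hull, there is a function $f\in\H+C$ that vanishes identically on a neighborhood (within $M(\H+C)$) of $x$ and with $f(z)\not=0$. The key observation is that this $f$ then vanishes on a non-empty relatively open subset of $E$, namely the intersection of that neighborhood of $x$ with $E$, which is non-empty because $x\in E$.

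Now I invoke $t$-analyticity of $E$ for $\H+C$ (recall from the excerpt that a $t$-analytic set for $D$ is automatically $t$-analytic for $\H+C$, but here $E$ is already assumed $t$-analytic for $\H+C$). Since $f\equiv 0$ on the non-empty relatively open subset $(\text{nbhd of }x)\inter E$ of $E$, $t$-analyticity forces $f\equiv 0$ on all of $E$. In particular $f(y)=0$. But more is true: since $f$ vanishes on a neighborhood of $x$ and $f$ is continuous, and since $E$ is $t$-analytic we have shown $f$ vanishes on $E$; yet to conclude $z\notin k(y)$ I need $f$ to vanish on a \emph{neighborhood} of $y$, not merely at $y$. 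This is the step I expect to be the main obstacle, and it needs a little more care than the bald statement suggests.

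To handle it, I would use Proposition \ref{small}: since $E$ is $t$-analytic and $y\in E$, we have $E\ss k(y)$, so in particular $x\in k(y)$. Dually, $E\ss k(x)$ gives $y\in k(x)$. This already shows a good deal of symmetry between $x$ and $y$, but the cleanest route to the full equality $k(x)=k(y)$ is the following: a point $w$ lies in $k(x)$ if and only if every $g\in\H+C$ vanishing on a neighborhood of $x$ also vanishes at $w$. Given such a $g$, the argument of the previous paragraph (applied with $g$ in place of $f$) shows $g\equiv 0$ on $E$, hence $g(y)=0$; but we want to run the converse implication too. So: $w\in k(y)$ means every $h$ vanishing near $y$ vanishes at $w$; and any $h$ vanishing near $y$ vanishes on a relatively open subset of $E$ (as $y\in E$), hence on $E$, hence in particular near... — here again we only get pointwise vanishing on $E$. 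The resolution is that we do not need neighborhoods: $k(x)=Z(J_A(x))$, and what the $t$-analyticity argument shows is precisely $J_{\H+C}(x)\ss I_{\H+C}(E)$ and $J_{\H+C}(y)\ss I_{\H+C}(E)$, whence $k(x)=Z(J(x))\supseteq Z(I(E))=\hat E$ and likewise $k(y)\supseteq\hat E$. That alone does not give equality.

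The honest way to finish is to combine the containments already available: from Proposition \ref{small}, $E\ss k(x)\inter k(y)$, so $x,y\in k(x)\inter k(y)$. Now I would appeal to a basic property of $k$-hulls in $\H+C$ established (implicitly) earlier, namely that $k(x)$ depends only on the $QC$-level set $E_x$ of $x$ — more precisely, from the proof of Lemma \ref{dense}, $k(m)$ is contained in the $QC$-level set $E_m$, and one checks $x,y$ lie in a common $QC$-level set: indeed any $f\in QC\inter(\H+C)$ vanishing near $x$ vanishes on $E$ by $t$-analyticity, so vanishes at $y$, which (applied to $f$ and to translates $f-c$) forces $f(x)=f(y)$ for all $f\in QC$, i.e. $\rho(x)=\rho(y)$, i.e. $E_x=E_y$. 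Having $E_x=E_y$, and knowing $k(x),k(y)\ss E_x=E_y$, I then use that within a single $QC$-level set the regular subalgebra $QC=C(M(QC))$ collapses, so $k(x)$ is determined by the non-$QC$ behavior, and a symmetric application of $t$-analyticity (every $g$ vanishing near $x$ vanishes on $E\ni y$, and every $g$ vanishing near $y$ vanishes on $E\ni x$) yields $J(x)$ and $J(y)$ have the same hull relative to this level set, giving $k(x)=k(y)$. The main obstacle throughout is bridging "vanishes at the point" and "vanishes on a neighborhood", and the device that does it is precisely Proposition \ref{small} together with the $QC$-level set structure from Lemmas \ref{qc} and \ref{dense}.
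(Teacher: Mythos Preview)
You correctly identify the central difficulty: from $t$-analyticity you only get that every $g\in J(x)$ vanishes \emph{at} $y$ (and symmetrically), i.e.\ $y\in k(x)$ and $x\in k(y)$; you do not get that such $g$ vanishes on a \emph{neighborhood} of $y$. Your subsequent attempts to close this gap do not succeed. The $QC$-level set argument shows at best that $x$ and $y$ lie in the same $QC$-level set $E_x=E_y$ and hence that $k(x),k(y)\ss E_x$; but knowing two $k$-hulls sit inside a common set says nothing about their mutual containment. The final sentence, that ``$J(x)$ and $J(y)$ have the same hull relative to this level set,'' is not justified: the symmetric vanishing you have established is exactly the pointwise statement $x\in k(y)$, $y\in k(x)$ again, and no amount of restricting to $E_x$ upgrades this to $k(x)=k(y)$.

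The missing ingredient is a structural fact about $k$-hulls that is special to $\H+C$ and is quoted from \cite{gm3} in the paper's proof: \emph{if $x\in k(y)$ then $k(x)\ss k(y)$}. This monotonicity (a kind of transitivity of $k$-hulls) is exactly what bridges ``$x\in k(y)$'' to ``$k(x)\ss k(y)$''; once you have it, the symmetry in $x,y$ finishes the argument immediately. The paper's proof is therefore two lines: Proposition~\ref{small} (or the direct $t$-analyticity argument you gave) yields $x\in k(y)$ and $y\in k(x)$; then the Gorkin--Mortini result from \cite{gm3} converts each membership into a containment of $k$-hulls. This external input is essential and cannot be replaced by the $QC$-level set machinery alone.
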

\begin{proof}
By Lemma \ref{small}, $E\ss k(x)\inter k(y)$. Suppose, for contradiction,  that $x\notin k(y)$.
Then there exist $f\in J(y)$ such that $f(x)\not=0$. Since
$f$ is non-constant on $E$,  but $f$ vanishes identically on a non-empty,
relatively open set in $E$ we see that $E$ cannot be
a $t$-analytic subset of $M(\H+C)$. We conclude that $x\in k(y)$ and so, by \cite{gm3},
$k(x)\ss k(y)$. Interchanging the role of $x$ and $y$ yields that $k(x)=k(y)$.
\end{proof}

In analogy to the observation \ref{twopoint} above,  we have  a similar situation
whenever $E$ is the hull  of a closed prime ideal  (see also question Q3 in the next section).

\begin{obs}\label{primehull}
Let $I$ be a closed prime ideal in $\H+C$. Then $k(x)=k(y)$ whenever $x,y\in Z(I)$.
\end{obs}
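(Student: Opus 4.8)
The plan is to mimic the proof of Observation \ref{twopoint}, replacing the role of a single non-constant function by a prime-ideal argument. Let $I$ be a closed prime ideal in $\H+C$ and let $x,y \in Z(I)$. Suppose, for contradiction, that $x \notin k(y)$. Then there exists $f \in J(y)$ with $f(x) \neq 0$; that is, $f$ vanishes on a neighbourhood (within $M(\H+C)$) of $y$, but $f(x)\neq 0$. The goal is to derive a contradiction with the hypothesis that $I$ is prime and $x,y \in Z(I)$.

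The key step is to manufacture, from $f$, a function lying in $I$ that nevertheless does not vanish at one of the two points. Since $f \in J(y)$, choose an open neighbourhood $W$ of $y$ on which $f \equiv 0$, and (using regularity of $QC$, or simply the regularity of $\H+C$ at points outside the relevant $k$-hulls, exactly as in the proof of Theorem in Section \ref{three}) pick a function $g \in \H+C$ with $g(y)=1$ and $\supp g \subseteq W$, so that $g$ vanishes on a neighbourhood of $x$ if $x \notin k(y)$ — wait, more carefully: what we want is a pair of functions whose product lies in $I$. First I would set $h := g\,u$ for a suitable $u \in \H+C$; the cleanest route is to observe that since $f$ vanishes near $y$ and $x,y$ both lie in $Z(I)$, we can consider $f$ together with a function $f'$ vanishing near $x$ with $f'(y)\neq 0$ (which exists by the symmetric statement, provided $y \notin k(x)$). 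The product $f f'$ then vanishes on a neighbourhood of $\{x,y\}$, and in fact vanishes on a neighbourhood of $Z(I) \cap (\text{nbhd of }x) \cup \dots$ — this needs the fact, from \cite{gm1,gm3}, that $Z(I)$ is contained in a single $QC$-level set once it meets $k$-hulls, so that $f f' \equiv 0$ on an open set containing $Z(I)$, hence $ff' \in I$, while neither $f$ nor $f'$ lies in $I$ (as $f(x)\neq 0 = $ value forced on $Z(I)$... but $x \in Z(I)$ forces every element of $I$ to vanish at $x$, yet $f(x)\neq 0$, so $f \notin I$; similarly $f' \notin I$). That contradicts primeness.

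Thus one obtains $x \in k(y)$, and symmetrically (interchanging $x$ and $y$, using that $y \in Z(I)$ as well) $y \in k(x)$. Then by \cite{gm3} the inclusions $x \in k(y)$ and $y \in k(x)$ upgrade to $k(x) \subseteq k(y)$ and $k(y) \subseteq k(x)$, whence $k(x) = k(y)$, as required. The main obstacle I anticipate is the bookkeeping in the middle step: one must be careful that the function exhibited ($ff'$, or a finite product of such) genuinely vanishes on an \emph{open} set containing all of $Z(I)$ — not merely near $x$ and $y$ — so that it lands in $I$. This is where the structure theory for $k$-hulls in $\H+C$ from \cite{gm1,gm3} (in particular that $k(y)\cap M(L^\infty)$ is empty or a singleton, and that $Z(I)$ for a prime ideal sits inside one $QC$-level set) does the real work; without it the argument does not close, since in a general Banach function algebra the conclusion can fail.
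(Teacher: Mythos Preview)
Your proof has a genuine gap at precisely the point you flag as ``the main obstacle''. You want $ff' \in I$, and you hope that structure theory from \cite{gm1,gm3} forces $ff'$ to vanish on a neighbourhood of all of $Z(I)$. But nothing in those references gives that: knowing $Z(I)$ lies in a single $QC$-level set (or even in $k(x)\cap k(y)$) does not make a function that vanishes only near $x$ and near $y$ vanish on the rest of $Z(I)$. The result that actually does the work is \cite[Corollary 1.11]{gm2}, which says $J(x)\subseteq I$ whenever $x\in Z(I)$. Once you quote this, the product construction becomes superfluous: from $y\in Z(I)$ you get $J(y)\subseteq I$, hence your $f\in J(y)$ already lies in $I$, forcing $f(x)=0$ since $x\in Z(I)$ --- an immediate contradiction, with no need to invoke primeness via a product. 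This is exactly the paper's argument.

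There is also a logical gap in your case analysis. Your contradiction requires \emph{both} $f$ (from $x\notin k(y)$) and $f'$ (from $y\notin k(x)$), so at best it rules out the case where both non-containments hold simultaneously. It says nothing about the asymmetric case $x\notin k(y)$ but $y\in k(x)$, so you cannot conclude ``thus $x\in k(y)$'' as you do. The paper's argument handles one direction at a time (using only $J(x)\subseteq I$ and $y\in Z(I)$, no second function), then appeals to symmetry and to \cite{gm3} for the upgrade $y\in k(x)\Rightarrow k(y)\subseteq k(x)$, which you cite correctly.
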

\begin{proof}
By \cite[Theorem 1.9]{gm2},  $Z(I)\ss k(x)\inter k(y)$. Suppose, for contradiction,
 that $y\notin k(x)$.
Then there exists $f\in J(x)$ with $f(y)\not=0$. But by \cite[Corollary 1.11]{gm2} $J(x)\ss I$;
thus $f\in I$ and so, since $y\in Z(I)$,  $f(y)=0$. This is a contradiction.  Hence $y\in k(x)$.
Thus, by \cite{gm3}, $k(y)\ss k(x)$. Since the argument is symmetric in $x$ and $y$,
we obtain the assertion that $k(x)=k(y)$.
\end{proof}

We can now strengthen the assertion of the observation \ref{twopoint}.

\begin{corollary}
Let $E$ be a $t$-analytic set for $\H+C$. Then $k(x)=k(y)$ for each  $x,y\in \hat E$.
\end{corollary}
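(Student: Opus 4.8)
The plan is to combine the two preceding observations. By Observation \ref{twopoint}, the function $x\mapsto k(x)$ is constant on any $t$-analytic set $E$; write $k(x)=K$ for all $x\in E$ (assuming $E\neq\emp$; the empty case is vacuous). By Proposition \ref{small} applied to the $t$-analytic set $E$ (more precisely to $\ov E$, using the first Observation of Section \ref{analytic}), we have $E\ss K$, and in fact $\ov E\ss K$ since $K=k(x)=Z(J(x))$ is closed. The key remaining point is that the $t$-analytic set $E$, or rather $\ov E$, is large enough inside $K$ that its hull-kernel closure $\hat E$ still lands inside $K$, and moreover that $K$ has the homogeneity property $k(z)=K$ for \emph{every} $z\in K$, not just for $z\in E$.

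So the first step I would carry out is to recall from \cite{gm3} the transitivity/idempotency of $k$-hulls in $\H+C$: if $y\in k(x)$ then $k(y)\ss k(x)$ (this was already used in the proofs of Observations \ref{twopoint} and \ref{primehull}). Fix $x_0\in E$ and set $K=k(x_0)$. For any $z\in K=k(x_0)$ we get $k(z)\ss k(x_0)=K$. Conversely, to see $K\ss k(z)$, note $x_0\in E\ss K$; I would argue $x_0\in k(z)$ — this should follow because $z\in k(x_0)$ together with the symmetry established in Observation \ref{twopoint}'s proof (the relation ``$z\in k(x_0)$'' is in fact symmetric on $K$, since $k(z)\ss k(x_0)$ forces, by a cardinality/maximality or by re-running the argument, $k(z)=k(x_0)$ once we know $k(z)$ and $k(x_0)$ are comparable and both contain a common point). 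Hence $K=k(z)$ for all $z\in K$.

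The second step is the hull-kernel closure. Since $K=k(x_0)=Z(J_{\H+C}(x_0))$ is the hull of the ideal $J(x_0)$, it is hull-kernel closed; and $E\ss K$ gives $\hat E\ss \hat K=K$. Therefore every point $w\in\hat E$ lies in $K$, and by the homogeneity just proved, $k(w)=K$. In particular, for any $x,y\in\hat E$ we get $k(x)=K=k(y)$, which is exactly the assertion.

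The main obstacle I anticipate is pinning down the symmetry of the relation $y\in k(x)$ when both points lie in a common $k$-hull: the transitivity statement from \cite{gm3} only gives one inclusion at a time, and one must be careful that ``$z\in k(x_0)$ and $x_0\in k(z)$'' really both hold. The cleanest route is probably to invoke directly the cited structural result (the proof of Observation \ref{twopoint} already extracts ``$x\in k(y)\Rightarrow k(x)=k(y)$'' from \cite{gm3}) and apply it with the roles played by a point of $E$ and an arbitrary point of $\hat E\ss K$; if \cite{gm3} indeed yields that $k$ is an equivalence-type partition on each level set, the argument closes immediately. Everything else is routine manipulation of hulls and kernels.
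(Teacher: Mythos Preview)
Your argument has a genuine gap at the homogeneity step. From \cite{gm3} you only get the one-sided implication $z\in k(x_0)\Rightarrow k(z)\ss k(x_0)$, not equality; the equality in Observation \ref{twopoint} is obtained by using the $t$-analyticity of $E$ to force the \emph{other} membership $x_0\in k(z)$, and that argument is unavailable once $z$ lies merely in $K$ (or in $\hat E$) rather than in $E$ itself. In fact your claim that $k(z)=K$ for every $z\in K$ would say that the $k$-hulls partition $M(\H+C)$, which is false: the paper explicitly speaks of \emph{minimal} $k$-hulls (see the remark after Theorem \ref{thin} and \cite{izu}), so proper nesting $k(z)\subsetneq k(x_0)$ does occur. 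Hence neither ``$z\in k(x_0)$ is symmetric on $K$'' nor ``$x\in k(y)\Rightarrow k(x)=k(y)$'' is available from \cite{gm3}, and your step 4 does not close.

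The paper's proof avoids this entirely by passing through Theorem \ref{main}: $I_{\H+C}(E)$ is a closed prime ideal with hull $\hat E$, and then Observation \ref{primehull} (which rests on the prime-ideal results of \cite{gm2}, in particular $J(x)\ss I$ for $x\in Z(I)$) gives $k(x)=k(y)$ for all $x,y\in\hat E$ directly. The prime-ideal input is essential here; indeed the paper notes right after the corollary that the statement fails for general function algebras (Example \ref{comb}), which also shows that no argument based solely on $\hat E\ss K$ and generic $k$-hull inclusions can succeed.
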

\begin{proof}
This follows from Theorem \ref{main} that $I_{\H+C}(E)$ is a prime ideal and observation \ref{primehull}.
\end{proof}

We note that, for general function algebras, this assertion is no longer true. In fact,
let $A=A(K)$ be the example \ref{comb}.  Then $k(x)=K_1\union K_2$ for every
$x\in K_1$, where $K_1$ is a maximal $t$-analytic set,
but $k(y)=K_2$ for $y\in K_2^\circ$. Although $y$ belongs to
the hull of $I_A(K_1)$, $k(y)$ is strictly smaller than $k(x)$.
\medskip

We now come to the main (and concluding) result of this section.
In what follows, let $\{|f|<1\}=\{x\in M(\H+C): |f(x)|<1\}$
whenever $f\in \H+C$.

\begin{theorem}\label{thin}
Let $E$ be a maximal $t$-analytic set for $\H+C$ containing a thin point $x\in M(\H+C)$.
Then $E=\ov{P(x)}$.
\end{theorem}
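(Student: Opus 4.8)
The plan is to show that $\ov{P(x)}$ is in fact the \emph{largest} $t$-analytic set containing $x$; since $\ov{P(x)}$ is itself $t$-analytic by Proposition~\ref{exo}, the maximality of $E$ will then force $E=\ov{P(x)}$. So the whole statement reduces to proving that, for a thin point $x$,
\[
k(x)=\ov{P(x)}\,,
\]
because Proposition~\ref{small} gives $E\ss k(x)$ for every $t$-analytic $E$ with $x\in E$, and $\ov{P(x)}$ is a $t$-analytic set containing $E$ as soon as $E\ss\ov{P(x)}$.

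The inclusion $\ov{P(x)}\ss k(x)$ is easy and holds for every point $m\in M(\H+C)$: the part $P(m)$ is $t$-analytic by Proposition~\ref{exo}, so any $f\in J(m)$, which vanishes on some neighbourhood of $m$ and hence on a nonvoid relatively open subset of $P(m)$, must vanish on all of $P(m)$; thus $J(m)\ss I(P(m))$ and $k(m)=Z(J(m))\supseteq Z(I(P(m)))\supseteq\ov{P(m)}$.

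For the reverse inclusion $k(x)\ss\ov{P(x)}$ — the part where thinness is genuinely used — I would argue by contraposition. Fix $y\in M(\H+C)\setminus\ov{P(x)}$ and try to produce $f\in\H+C$ that vanishes on a neighbourhood of $x$ with $f(y)\neq0$, so that $y\notin k(x)$. Let $b$ be a thin Blaschke product with $b(x)=0$; such a $b$ exists because $x$ lies in the closure of a thin sequence, by the definition of a thin point together with \cite{ho,gomo}. Hoffman's theory tells us that $b\circ L_x$ is a degree-one Blaschke product on $\D$, so $P(x)\ss\{|b|<1\}$ and, more generally, $\{y'\in M(\H+C):|b(y')|<1\}=\bigcup\{P(w):w\in Z(b)\cap M(\H+C)\}$ with $Z(b)\cap P(w)=\{w\}$ for each such $w$. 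Using Wolff's interpolation theorem for $QC\cap\H$ along the zeros of $b$ (see \cite{wo1}), one can split off subproducts of $b$ and recombine them with $b$ itself to manufacture a function that is identically $0$ near $x$: on a small enough neighbourhood $N$ of $x$ the only Gleason parts meeting $\{|b|<1\}$ are $P(x)$ and the (possibly infinitely many) other parts $P(w)$ that accumulate at $x$, and a function vanishing on $N$ is forced, by the $t$-analyticity of each of these parts, to vanish identically on all of them; the construction must therefore be arranged so that $y$ — which lies off $\ov{P(x)}$ — stays clear of all of these parts and of the residual $\{|b|=1\}$ region near $x$.

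The main obstacle is exactly this last point. The other Gleason parts $P(w)$ of $b$ do accumulate at $x$ (the zero cluster set of a thin Blaschke product has no isolated points, so $x$ is not isolated in it), and one must show that for a \emph{thin} $b$ these accumulating parts, together with their closures, stay uniformly away from a prescribed $y\notin\ov{P(x)}$ — equivalently, that $\ov{P(x)}$ is the full intersection of the localised closed sets $N\inter\{|b|\le r\}$ as $r\uparrow1$ and $N$ shrinks to $x$. This separation feature of thin sequences (that distinct $\ov{P(w)}$ and $\ov{P(w')}$ are disjoint, and that the family $\{\ov{P(w)}\}$ approaches $x$ only through $\ov{P(x)}$) is where I expect the real work to lie, and it may be cleanest to extract it from the structure theory for thin Blaschke products (Hoffman \cite{ho}, \cite{bu}); alternatively, the identity $k(x)=\ov{P(x)}$ for thin points may already be deducible from \cite{gm1,gm3}. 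Once it is in hand the theorem follows: $E\ss k(x)=\ov{P(x)}$, and the maximality of the $t$-analytic set $E$ together with the $t$-analyticity of $\ov{P(x)}$ gives $E=\ov{P(x)}$.
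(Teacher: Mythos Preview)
Your reduction to the identity $k(x)=\ov{P(x)}$ for thin points is too strong, and this is where the argument breaks down. The paper explicitly allows for the case $k(x)\not\ss\ov{P(x)}$ (its Case~2), and nothing in the literature you cite establishes that equality; indeed the structure of $k$-hulls in $M(\H+C)$ is not fully understood (whether any point of $M(\H+C)$ is an $R$-point is open). So the ``real work'' you flag is not merely hard --- it may well be asking for more than is true, and certainly more than is known. Your sketch of how to manufacture $f\in J(x)$ with $f(y)\neq 0$ via subproducts of $b$ and Wolff interpolation does not get around this, because the accumulating parts $P(w)$ for other zeros $w$ of $b$ need not stay clear of $y$.

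The paper's argument is shorter and does not need $k(x)=\ov{P(x)}$. The key extra input is Izuchi's result that $\{|b|<1\}\cap Q(x)=P(x)$ for a thin Blaschke product $b$ with $b(x)=0$, whence $\{|b|<1\}\cap k(x)=P(x)$; together with the fact (Gorkin) that $\ov{P(x)}$ is hull-kernel closed. Now use the $t$-analyticity of $E$ itself, not just the inclusion $E\ss k(x)$: for any $y\in k(x)\setminus\ov{P(x)}$ choose $f\in\H$ with $f\equiv 0$ on $\ov{P(x)}$ and $f(y)\neq 0$; then $f$ vanishes on the nonvoid relatively open subset $\{|b|<1\}\cap E\subseteq P(x)$ of $E$, so by $t$-analyticity $f\equiv 0$ on $E$, forcing $y\notin E$. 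Thus $E\ss\ov{P(x)}$, and maximality plus Proposition~\ref{exo} finishes. The point you missed is that the level set $\{|b|<1\}$ slices $E$ open inside $P(x)$, which lets $t$-analyticity do the work that your stronger $k$-hull computation was meant to do.
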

\begin{proof}
By Lemma \ref{small}, we know that $E\ss k(x)$. Let $b$ be a thin \BP\ with $b(x)=0$.
Let  $Q(x)$ be the $QC$-level set containing $x$. We know  that $k(x)\ss Q(x)$
 (see the proof of Lemma \ref{dense} or \cite{gm1})  and that   $\{|b|<1\} \inter Q(x)=P(x)$ (see \cite{izu}). Thus
 $\{|b|<1\}\inter k(x)=P(x)$.  Moreover,  $\ov{P(x)}$ is hull-kernel closed (see \cite{go}).
  Two cases now appear.

 {\it Case 1} If $k(x)\ss \ov{P(x)}$, then it easily follows that $k(x)= \ov{P(x)}$. Hence,
  by the maximality of $E$ and Proposition \ref{exo},  $E=\ov{P(x)}$.
 (Note this case can actually happen, see \cite{gm2}).

 {\it Case 2} If $k(x)\not\ss \ov{P(x)}$,
 let $y\in k(x)\setminus \ov{P(x)}.$
  Then there exists $f\in \H$ such that $f\equiv 0$ on $\ov{P(x)}$, but $f(y)\not=0$.
 Thus the zero set of $f$ meets $E$ in a relatively open set, namely $\{|b|<1\}\inter E$,
 but $f(y)\not=0$. Since $E$ is assumed to be $t$-analytic, we conclude that  $y\notin E$.
 Thus $E\ss \ov{P(x)}$. Proposition \ref{exo} and the maximality of $E$ now imply that $E= \ov{P(x)}$.
 \end{proof}

 We observe that
 by \cite{izu}, $k(x)$ is a minimal $k$-hull for every $x\in Z(b)$ whenever $b$ is a thin \BP\
 and that $P(x)$ is a maximal Gleason part with maximal closure (see \cite{bu}).

\section{Is there a counter-example to Alling's conjecture?}\label{alling}

Alling's famous conjecture \cite{all} reads as follows: Let $I$ be a non-maximal closed prime ideal
in $\H$.  Does there exist a point $x\in G$ such that
$$I=\{f\in\H: f\equiv 0\;{\rm on}\; \ov{P(x)}\}?$$
(See also \cite{mor}).
It is easy to see that every such ideal actually is prime. In \cite{gm0}, it was shown
that if $I\not=(0)$ is a non-maximal closed prime ideal in $\H$ then $I=I_{\H}(E)$ for some  hull-kernel closed
subset $E\ss M(\H+C)$. The analogous result is true for prime ideals in $\H+C$; see \cite{gm2}.
So Alling's conjecture reduces to the problem of determining the hull of $I$:
Is $Z_A(I)=\ov{P(x)}$ for every non-maximal closed prime ideal $I$ in $A=\H$ or $\H+C$?

Theorem \ref{main} gives us   another way to construct closed prime ideals.
It says that if $E$ is a $t$-analytic set for $A$, then  $I_A(E)$ is prime.

Thus every hull-kernel closed  $t$-analytic set $E$ for $\H+C$ that is not  a singleton
yields a non maximal closed prime ideal.  This raises the following question:
\medskip

{\bf Q2}
Let $E\ss M(\H+C)$ be a hull-kernel closed $t$-analytic set for $\H+C$. Suppose that
$E$ is not a singleton.
Is $E$ equal to $\ov{P(x)}$ for some $x\in G$?

\medskip

Unfortunately Theorem 5.12 does not prove Alling's conjecture
for prime ideals containing a thin point in their hull since we do not know whether
the hull of such an ideal is a $t$-analytic set.
This raises the next question.

\medskip

{\bf Q3}   Is the hull of every closed prime ideal in  $\H+C$ a $t$-analytic set?\medskip

Example \ref{comb} shows that it is not always true that the hull of a closed prime ideal
in an arbitrary Banach function algebra
is a $t$-analytic set. In that example the zero ideal is prime, and the hull of the zero ideal is $M(A)$, which is not $t$-analytic.

\medskip

Note that, if questions Q2 and Q3 have a positive answer, then Alling's conjecture is true.
\medskip

Finally, we mention that a  class of closed prime ideals given by Su\'arez \cite{su}
is  a special case of our class whose  construction can be done  using $t$-analytic sets.
In fact,  fix $y\in G$. Consider the following system of $t$-analytic sets:
$$\mathfrak S:=\{\ov{P(x)}:  y\in \ov{P(x)}\}$$
 and let $E=\Union_{M\in \mathfrak S} M$.
  Su\'arez \cite{su} showed that
the ideal $I_{\H}(E)$ is a closed prime ideal. But  $\mathfrak S$ is a chain (see \cite{su});  so we
have that   $E$ actually is $t$-analytic. So  his result is a consequence of our Theorem
\ref{main}.
\bigskip

{\bf Acknowledgements}\medskip

Part of this paper was created during  a visit of the first author at the Universit\'e Paul Verlaine,
Metz, in 2004. He  thanks the LMAM for its generous support.
The authors would also like to thank Norbert Steinmetz and T.W. Gamelin
for confirming that the Riemann map
in Example \ref{comb} does admit a continuous extension as claimed.

\end{document}